\newcommand{\Real}{\mathbb{R}}
\newcommand{\Natural}{\mathbb{N}}
\newcommand{\Complex}{\mathbb{C}}
\newcommand{\Disk}{\mathbb{D}}
\newcommand{\todo}[1]{{\sffamily To do:}}
\newtheorem{theorem}{Theorem}
\newtheorem{proposition}{Proposition}
\newtheorem {lemma}{Lemma}
\newenvironment{proof}{{\flushleft \emph{Proof}:}}{\ding{110}}
\newenvironment{proofgfull}{{\flushleft \emph{Proof of Lemma \ref{gfull}}:}}{\ding{110}}
\newenvironment{proofp00}{{\flushleft \emph{Proof of Proposition \ref{p00}}:}}{\ding{110}}
\numberwithin{equation}{section}
\title{Asymptotic behavior of random walks on a half-line with a jump at the origin}
\author{Guy Katriel\\Department of Mathematics, Ort Braude College,\\ Karmiel, Israel}
\date{}
\begin{document}

\maketitle

\begin{abstract} We study a discrete-time random walk on the non-negative integers, such that when $0$ is reached a jump occurs to an
arbitrary location $k\geq 0$ with probability $p_k$. We obtain an asymptotic formula for the expected position at time $n$, in dependence on the probabilities $p_k$ and on the starting position. Our proof of this result displays the relevance of the spectral analysis of the transition operator associated to the
stochastic process, both of its eigenvalues and of its resonances.
\end{abstract}

\section{Introduction}

We consider a discrete-time random walk performed on the non-negative integers $\{0,1,2,\cdots\}$ such that,
\begin{itemize}
  \item When at position $j\geq 1$, one moves left or right with equal probability,
  \item When at position $j=0$, one jumps to an arbitrary integer $k$, with probability $p_k$, where
\begin{equation}\label{sump}p_k\geq0,\;\;\;\;\sum_{k=0}^\infty p_k=1.\end{equation}
\end{itemize}
In other words, denoting by $X_n$ the position at time $n$:
\begin{eqnarray*}\label{ev}&&P(X_{n+1}=j-1\;|\;X_n=j)=P(X_{n+1}=j+1\;|\;X_n=j)=\frac{1}{2},\;\;\;j\geq 1,
\nonumber\\&&P(X_{n+1}=k\;|\;X_n=0)=p_k,\;\;\;k\geq 0.\end{eqnarray*}

This process is a special case of a more general class of processes considered by Lalley \cite{lalley}. Specialized to
the case considered here, the main result of \cite{lalley} is that, under the condition $p_0\leq \frac{1}{2}$, for each $j,k$ there exists a constant $C_{jk}$ so that, as $n\rightarrow \infty$,
$$P(X_n=k\;|\; X_0=j)\sim C_{jk}\frac{1}{\sqrt{n}},$$
where $\sim$ means that the quotient of the two sides approaches $1$.
Drmota (\cite{drmota}, Theorem 1) proved a more precise result in the special case in which $p_k=0$ for $k\geq 2$.

Here we are interested in obtaining a more precise characterization of the dependence of the asymptotics on the parameters defining the process, namely the
jump probabilities $p_k$ and the starting point $j$.
However, we shall not analyze the probabilities $P(X_n=k\;|\; X_0=j)$ themselves, but rather the expected position at time $n$, assuming the starting position $X_0=j$,
$$E(X_n\;|\; X_0=j)= \sum_{k=0}^\infty k P(X_n=k\;|\; X_0=j).$$
We prove the following asymptotic formula, as  $n\rightarrow \infty$
\begin{eqnarray}\label{cent}
&&E(X_n\;|\; X_0=j)=\frac{2}{\sqrt{2\pi}}\sqrt{n} +\frac{E(Y^2)-1}{2E(Y)}\\
&+&\frac{1}{\sqrt{2\pi}}\Big[\frac{1}{3}- \frac{E(Y^3)}{3E(Y)}+\frac{(E(Y^2)-1)^2}{2(E(Y))^2}-j\frac{E(Y^2)-1}{E(Y)}+j^2\Big]\frac{1}{\sqrt{n}}
  +O\Big(\frac{1}{n^{\frac{3}{2}}} \Big).\nonumber
\end{eqnarray}
where $Y$ is a random variable expressing the size of a jump, that is
$$E(Y=k)=p_k, \;\;k=0,1,2,\cdots$$
so that
$$E(Y^m)=\sum_{k=0}^\infty p_k k^m.$$

Several features of the above asymptotic formula are noteworthy:
\begin{itemize}
\item  The dominant term, of order $\sqrt{n}$, reflects the universality in the behavior of the process, as it does not depend on the jump probabilities $p_k$, nor on the
starting position $j$.

\item The second term, which is a constant shift independent of $n$, reflects the dependence of the process on the jump probabilities
$p_k$.  This term can be either positive or negative depending on whether $E(Y^2)>1$ or $E(Y^2)<1$.
Note that it too is independent of the starting position $j$.

\item The third term, of order $\frac{1}{\sqrt{n}}$, already depends on both the jumps at the origin and on the starting position $j$. One can thus say that
the memory of the initial position decays like $\frac{1}{\sqrt{n}}$.
\end{itemize}

The performance of formula (\ref{cent}) is illustrated in Table \ref{t1}, where the results given by the asymptotic formula (and its truncation to one or
two terms) are compared with the precise expected values, computed using the generating function derived in section \ref{gfunc}.

\begin{table}\label{t1}
\begin{center}
\begin{tabular}{|c|c|c|c|c|}
  \hline
 $n$ & $I$ & $I+II$ & $I+II+III$ & $E(X_n|X_0=j)$ \\
\hline
  % after \\: \hline or \cline{col1-col2} \cline{col3-col4} ...
  10 & 2.52314 & 3.63425 & \underline{5}.40355 & 5.26359 \\
  20 & 3.56826 & 4.67937 & \underline{5}.93046 & 5.88291 \\
  50 & 5.64191 & 6.75302 & \underline{7.5}4430 & 7.53373 \\
  100 & 7.97885 & 9.08996 & \underline{9.64}944 & 9.64614 \\
  200 &  11.28379 & 12.39490 & \underline{12.7}9053 & 12.78946 \\
  400 &  15.95769 & 17.06880 & \underline{17.348}56 & 17.34820 \\
  \hline
\end{tabular}
\end{center}
\caption{Comparison of results from the asymptotic  formula of Theorem \ref{main} with the exact value of the expected position after $n$ steps, computed using the generating function. $I$,$II$,$III$ denote the three terms in the asymptotic formula. In this example $p_0=\frac{3}{10}, p_1=p_2=\frac{1}{10}, p_3=\frac{1}{2}$, and $j=5$.}
\end{table}

Formula (\ref{cent}) is proved under a few restrictions on the jump probabilities $\{p_k\}$, some of which are {\it{essential}}, in the sense that the
formula is not valid without them, and some {\it{technical}}, in that we believe they can be removed by elaborating the proof, but shall not do so here.
We now discuss these restrictions.

First of all, for (\ref{cent}) to make sense, we need that $E(Y)>0$. Clearly the only case in which $E(Y)=0$ is when $p_0=1$ and $p_k=0$ for $k\geq 1$, so that
when the origin is reached, one remains there at all times. In this exceptional case we will show (see section \ref{gfunc}) that the {\it{exact}} expected value is given by
the following somewhat surprising result:
\begin{proposition}\label{p00}
If $p_0=1$ then
$$E(X_n\;|\; X_0=j)=j,\;\;\forall j,n\geq 0.$$
\end{proposition}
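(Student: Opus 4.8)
The plan is to observe that, under the hypothesis $p_0=1$, the process $X_n$ is a bounded nonnegative martingale with respect to its own filtration, so that $E(X_n\mid X_0=j)$ is constant in $n$ and hence equal to its value at $n=0$, which is $j$.

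First I would record integrability: since the position changes by at most $1$ at each step, we have $0\le X_n\le j+n$ whenever $X_0=j$, so every $X_n$ is bounded, all expectations below are finite, and the interchanges of sums and expectations we use are legitimate. The key step is then to compute the one-step conditional expectation $E(X_{n+1}\mid X_n=m)$ for each $m\ge 0$. For $m\ge 1$ the walk moves to $m-1$ or $m+1$ with probability $\half$ each, so $E(X_{n+1}\mid X_n=m)=\half(m-1)+\half(m+1)=m$. For $m=0$ the assumption $p_0=1$ forces $X_{n+1}=0$, so $E(X_{n+1}\mid X_n=0)=0=m$ as well. Hence $E(X_{n+1}\mid X_n)=X_n$ for all $n$. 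This is precisely the place where $p_0=1$ enters: for a general jump law the increment at the origin has nonzero mean $E(Y)$, and it is exactly this bias that produces the leading $\sqrt{n}$ term in (\ref{cent}).

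Finally I would conclude by iteration. Writing $f(n,j)=E(X_n\mid X_0=j)$ and conditioning on $X_n$ via the Markov property and time-homogeneity, $f(n+1,j)=E\big(E(X_{n+1}\mid X_n)\mid X_0=j\big)=E(X_n\mid X_0=j)=f(n,j)$, so $f(n,j)=f(0,j)=j$ for all $n$ by induction. (Equivalently, running the recursion forward: $f(n+1,j)=\half f(n,j-1)+\half f(n,j+1)$ for $j\ge 1$ and $f(n+1,0)=f(n,0)$, which together with $f(0,j)=j$ give $f(n,j)=j$ by induction on $n$.)

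There is essentially no obstacle in this argument; the only subtlety worth flagging is that the martingale $X_n$ is \emph{not} uniformly integrable. The walk is absorbed at $0$, which (being a recurrent symmetric walk before absorption) it reaches almost surely, so in fact $X_n\to 0$ almost surely even though $E(X_n\mid X_0=j)=j$ for every finite $n$; one should therefore resist the temptation to pass to the limit in $n$.
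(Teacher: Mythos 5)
Your proof is correct, but it takes a genuinely different route from the paper. The paper obtains Proposition \ref{p00} as an immediate corollary of the generating-function formula of Lemma \ref{gexp}: since $p_0=1$ forces $E(Y)=0$, the formula collapses to $H_j(z)=j\cdot\frac{1}{1-z}$, whose $n$-th coefficient is $j$ for every $n$. You instead argue probabilistically that under $p_0=1$ the process is a (bounded, hence integrable) martingale: the one-step conditional mean is $m$ both at interior sites $m\geq 1$ (symmetric step) and at the origin (absorption), so $E(X_{n+1}\mid X_n)=X_n$, and iterating via the Markov property gives $E(X_n\mid X_0=j)=j$; equivalently, your recursion $f(n+1,j)=\half f(n,j-1)+\half f(n,j+1)$ for $j\geq 1$, $f(n+1,0)=f(n,0)$, with $f(0,j)=j$, closes by induction. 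Both arguments are complete. Your approach is more elementary and self-contained (it needs none of the resolvent or generating-function machinery of section \ref{gfunc}) and it explains \emph{why} the statement holds — absence of drift at the origin — including the instructive remark that the martingale is not uniformly integrable, since the walk is absorbed at $0$ almost surely while its mean stays at $j$. The paper's route buys economy within its own framework: the identity drops out of a formula already needed for the main theorems, and it simultaneously exhibits exactly where the assumption $E(Y)>0$ (i.e.\ ($A_1$)) enters the asymptotic analysis.
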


Therefore, to exclude this special case, we make the assumption

(${\bf{A_1}}$)   $p_0<1.$

Another family of exceptional cases is when $p_k=0$ for all even $k$, so that the jump at $0$ is always to an odd value of $k$. In this case we will show that
the formula (\ref{cent}) is replaced by
\begin{eqnarray}\label{cent1}
&&E(X_n\;|\; X_0=j)=\frac{2}{\sqrt{2\pi}}\sqrt{n} +\frac{E(Y^2)-1}{2E(Y)}\\
&+&\frac{1}{\sqrt{2\pi}}\Big[\frac{1}{3}- \frac{E(Y^3)}{3E(Y)}+\frac{(E(Y^2)-1)^2}{2(E(Y))^2}-j\frac{E(Y^2)-1}{E(Y)}+j^2+\frac{1}{2}(-1)^{n+j+1}\Big]\frac{1}{\sqrt{n}}
  \nonumber\\&+&O\Big(\frac{1}{n^{\frac{3}{2}}} \Big)\nonumber
\end{eqnarray}
(note the added term in the coefficient of $\frac{1}{\sqrt{n}}$).
Therefore to exclude this special case, we also assume

(${\bf{A_2}}$)  There exists some even $k$ for which $p_k>0$.

Now we add two technical assumptions. First, our proof of (\ref{cent}) makes use of the assumption that

(${\bf{A_3}}$)  There exists an $N>0$ such that $p_k=0$ for $k>N$,

that is, the jump when the origin is reached can only be to a finite number of sites. We note that this ensures that the moments
$E(Y),E(Y^2),E(Y^3)$ which appear in (\ref{cent}) will all be finite, but it is natural to conjecture that (\ref{cent}) will remain valid
even when there are an infinite number of positive $p_k$'s, as long as these moments are finite. One may hope to prove (\ref{cent}) in the more
general case by approximating an infinite sequence of $p_k$'s by finite ones, but some estimates will be needed in order to control
the error term $O\Big(\frac{1}{n^{\frac{3}{2}}}\Big)$ during this approximation process, and we do not address this issue here.

To formulate our second technical assumption, we introduce the generating function for the probabilities $p_k$:
\begin{equation}\label{defh}
h(x)\doteq\sum_{k=0}^\infty p_k x^k,
\end{equation}
and the associated function
\begin{equation}\label{defphi}\phi(x)\doteq x^2+1-2xh(x),
\end{equation}
which will play an important role. By (\ref{sump}), the series in (\ref{defh}) converges at least for $|x|<1$, so that $h,\phi$  are holomorphic in
the unit disk. Under the assumption ($A_3$), $h$ and $\phi$ are in fact polynomials. We will use the assumption

(${\bf{A_4}}$) The polynomial $\phi$ does not have repeated roots.

Note that this is a `genericity' assumption, in the sense that for given degree, a generic polynomial of that degree will
satisfy it (the polynomials failing to satisfy it are those with discriminant $0$). This assumption can be eliminated by
approximating polynomials with repeated roots by generic polynomials with distinct roots, but this requires careful control of
error estimates, which we shall not carry out here. However, we do believe that the removal of assumption ($A_4$) is an easier matter than
the removal of assumption ($A_3$).

We can now state our main theorems

\begin{theorem}\label{main} Under assumptions ($A_1$)-($A_4$), we have (\ref{cent}) as $n\rightarrow \infty$.
\end{theorem}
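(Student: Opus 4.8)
The plan is to work with generating functions. Let $F_j(z,x) = \sum_{n=0}^\infty \sum_{k=0}^\infty P(X_n = k \mid X_0 = j) x^k z^n$ be the two-variable generating function encoding the whole distribution, and let $G_j(z) = \sum_{n=0}^\infty E(X_n \mid X_0 = j) z^n = \partial_x F_j(z,x)\big|_{x=1}$ be the generating function of the expected positions. Using the transition rules --- symmetric walk away from the origin, jump with law $p_k$ at the origin --- one derives a functional equation for $F_j(z,x)$; the key quantity controlling it is the function $\phi(x) = x^2 + 1 - 2xh(x)$ from \eqref{defphi}, evaluated with $x$ replaced by the characteristic root of the walk, i.e. one solves $x^2 - \frac{x}{z} + 1 = 0$ style relations. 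I would first carry out this computation in section \ref{gfunc} to obtain a clean closed form for $G_j(z)$ as a rational-type expression in $z$ and in the branch $\xi(z)$ of the inverse characteristic function, with denominator essentially $\phi(\xi(z))$ (up to the factor coming from $E(Y)$ and the normalization). This also yields Proposition \ref{p00} as the degenerate case $h \equiv 1$.

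Next I would perform singularity analysis on $G_j(z)$. The $\frac{1}{\sqrt n}$-type asymptotics $E(X_n \mid X_0 = j) \sim \frac{2}{\sqrt{2\pi}}\sqrt n + \cdots$ signal that $G_j(z)$ has its dominant singularity at $z = 1$, and that near $z=1$ it behaves like $c(1-z)^{-3/2}$ plus lower-order terms in powers of $(1-z)^{1/2}$; transfer theorems (Flajnolet--Odlyzko style) then convert a full Puiseux expansion of $G_j$ at $z=1$ into the desired expansion \eqref{cent} term by term, with $(1-z)^{-3/2} \mapsto \frac{2}{\sqrt\pi}\sqrt n$, $(1-z)^{-1/2}\mapsto \frac{1}{\sqrt{\pi n}}$, constants contributing the $n$-independent term, etc. The coefficients in \eqref{cent} --- the combinations $\frac{E(Y^2)-1}{2E(Y)}$, $\frac{E(Y^3)}{3E(Y)}$, $\frac{(E(Y^2)-1)^2}{2(E(Y))^2}$, and the $j$- and $j^2$-dependent pieces --- should emerge from expanding $\xi(z)$ and $\phi(\xi(z))$ to third order in $(1-z)^{1/2}$: indeed $h(1) = 1$, $h'(1) = E(Y)$, $h''(1) = E(Y^2)-E(Y)$, so the Taylor coefficients of $\phi$ at $x=1$ are exactly what produce those moment combinations, and the $j$-dependence enters through a factor like $\xi(z)^j$ in the numerator of $G_j$, whose expansion gives the $-j\frac{E(Y^2)-1}{E(Y)} + j^2$ terms.

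The two delicate points are: (i) justifying that $z=1$ is the \emph{only} singularity on the circle $|z|=1$, which is exactly where assumptions $(A_1)$ and $(A_2)$ enter --- $(A_1)$ rules out the trivial $p_0=1$ case (handled separately by Proposition \ref{p00}), and $(A_2)$ rules out a competing singularity at $z=-1$, which in the all-odd-jumps case genuinely contributes the extra oscillatory $\frac12(-1)^{n+j+1}$ term of \eqref{cent1}; and (ii) controlling the contribution of the other roots of $\phi$ (the ``resonances'' alluded to in the abstract) and the eigenvalues of the transition operator, to show they only affect the $O(n^{-3/2})$ remainder. Assumption $(A_3)$ makes $\phi$ a polynomial so these roots are finite in number and $\xi(z)$ is an honest algebraic function with a well-understood branch structure, and assumption $(A_4)$ guarantees $\phi$ has simple roots so that near each such root $\phi(\xi(z))$ vanishes to first order, giving at worst simple poles of $G_j$ whose locations lie strictly outside $|z| \le 1$ (or are cancelled); without $(A_4)$ one would face higher-order poles and a more delicate partial-fraction bookkeeping.

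The main obstacle I anticipate is \textbf{(ii)}: proving rigorously that all the ``spurious'' singularities --- the roots of $\phi$ other than $x=1$, pulled back through $\xi$ --- either lie in $|z| > 1$ or cancel against zeros of the numerator, so that they contribute only exponentially small (hence $O(n^{-3/2})$, indeed $o(n^{-M})$) corrections. This requires a genuine location argument for the roots of $\phi(x) = x^2+1-2xh(x)$ in the complex plane: one must show that apart from the root at $x=1$, any root $\rho$ with $|\rho|=1$ is excluded by $(A_1)$--$(A_2)$, and then translate $|\rho| \ne 1$ into the statement that the corresponding $z$-singularity has modulus $> 1$. Here the interplay between the eigenvalues of the transition operator (roots of $\phi$ inside the disk, giving true decaying modes) and its resonances (roots outside, relevant only through analytic continuation) is what the spectral picture in the abstract refers to, and assembling this cleanly --- rather than the Puiseux algebra, which is mechanical --- is the technical heart of the argument.
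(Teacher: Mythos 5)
Your overall route is the paper's route: derive the generating function $H_j(z)=2E(Y)\,\frac{1}{1-z}\,\frac{\rho(z)^{j+1}}{\phi(\rho(z))}+\frac{j}{1-z}$ with $\rho(z)=\frac{1-\sqrt{1-z^2}}{z}$, then do singularity analysis at $z=\pm 1$, with the moment combinations coming from the derivatives of $\phi$ at $x=1$ and the $j$-dependence from $\rho(z)^j$. The paper implements the second step by a global partial-fraction decomposition of $\frac{x}{\phi(x)}$ over the simple zeros of $\psi$ (where $\phi(x)=(x-1)\psi(x)$, using $(A_4)$), reducing everything to explicit pieces like $\sqrt{\tfrac{1+z}{1-z}}$, $\tfrac{1}{1-z}$, $\tfrac{\sqrt{1+z}}{(1-z)^{3/2}}$ and $K_\alpha(z)$, to which Darboux's theorem is applied; your local Puiseux-plus-transfer variant is an acceptable cosmetic difference. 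However, the step you explicitly defer -- your point (ii), which you yourself call the technical heart -- is a genuine gap, and it is precisely what the paper proves and what your plan cannot do without. You need two separate facts: first, that any zero $\alpha$ of $\phi$ with $|\alpha|<1$ pulls back to $z=\rho^{-1}(\alpha)=\frac{2\alpha}{\alpha^2+1}$ with $|z|>1$; the paper's Lemma \ref{eid} gets this in three lines from $\phi(\alpha)=0\Rightarrow h(\alpha)=\frac{\alpha^2+1}{2\alpha}=\frac1z$ together with $|h(\alpha)|\le\sum_k p_k|\alpha|^k<1$. Second, and easy to overlook in your framing, zeros of $\phi$ \emph{outside} the unit disk (the resonances) have pullback $\frac{2\alpha}{\alpha^2+1}$ that can land \emph{inside} the unit disk, so one must check that on the principal branch of $\sqrt{1-z^2}$ the numerator vanishes there and the singularity is removable (the computation in the proof of Lemma \ref{l1}, which hinges on $\Re\frac{\alpha^2-1}{\alpha^2+1}>0$ so that $\sqrt{\big(\tfrac{\alpha^2-1}{\alpha^2+1}\big)^2}=\tfrac{\alpha^2-1}{\alpha^2+1}$). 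Without both facts your expansion at $z=1$ is not justified, because you have not excluded poles of $H_j$ in the closed unit disk.

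Two further corrections to your picture. The contributions of these ``spurious'' pieces are \emph{not} exponentially small, contrary to your claim of $o(n^{-M})$: even after the location/cancellation argument, each $K_\alpha(z)$ (and $\rho(z)$ itself) retains branch points at $z=\pm1$ through $\sqrt{1-z^2}$, so their coefficients decay only like $n^{-3/2}$ -- which is fortunately all that \eqref{cent} requires. Likewise $(A_2)$ does not make $z=1$ the only singularity on $|z|=1$; the point $z=-1$ is always a branch point, and $(A_2)$ merely guarantees $\phi(-1)\neq0$ (Lemma \ref{not1}(ii)) so that its contribution sits at order $n^{-3/2}$ instead of producing the oscillatory $n^{-1/2}$ term of \eqref{cent1}. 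Finally, if you follow the paper's partial-fraction implementation rather than a purely local expansion, note that the degenerate case $p_0=p_1=\tfrac12$ (where $\psi$ is constant) needs the separate calculation of section \ref{special}, and that the $j$-dependence is most cleanly handled as in section \ref{mainproof}: expand $z^j\rho(z)^j=(1-\sqrt{1-z^2})^j$ binomially and account for the index shift via $\sqrt{n+j}=\sqrt n+\tfrac{j}{2\sqrt n}+O(n^{-3/2})$.
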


\begin{theorem}\label{main2} Under  assumptions ($A_3$), ($A_4$) and

$({\bf{A_2^c}})$  $p_k=0$ for all even $k$,

we have (\ref{cent1}) as $n\rightarrow \infty$.
\end{theorem}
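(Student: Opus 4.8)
The plan is to set up a generating function in two variables, the time variable $z$ (dual to $n$) and the space variable $x$ (dual to the position $k$), and to extract the asymptotics of $E(X_n\mid X_0=j)$ by singularity analysis. Concretely, write $q_{n,k}=P(X_n=k\mid X_0=j)$ and form $G(z,x)=\sum_{n,k}q_{n,k}z^nx^k$; the transition rule $q_{n+1,k}=\tfrac12 q_{n,k-1}+\tfrac12 q_{n,k+1}$ for $k\geq1$, together with the jump rule at the origin, yields a functional equation in which the function $\phi(x)=x^2+1-2xh(x)$ from (\ref{defphi}) appears as the key coefficient, after multiplying through by $x$ to clear the nearest-neighbour shifts. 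Solving this functional equation expresses $G(z,x)$ (and then the generating function $F(z)=\sum_n E(X_n\mid X_0=j)z^n=\partial_xG(z,x)|_{x=1}$) as a rational-in-$x$ expression whose coefficients are algebraic functions of $z$, with the relevant branch fixed by requiring the series to have non-negative coefficients and the right behaviour as $z\to0$.

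Next I would locate the singularities of $F(z)$ in the $z$-plane. The dominant singularity will be at $z=1$ (total probability is conserved, $G(z,1)=\tfrac1{1-z}$), and it will be of square-root type, $\sqrt{1-z}$, which is exactly what produces the $\sqrt n$ and $1/\sqrt n$ terms in (\ref{cent}) via transfer theorems ($[z^n](1-z)^{-1/2}\sim 1/\sqrt{\pi n}$, etc.); the constant term comes from a simple pole at $z=1$. The content of Theorem~\ref{main2} is that under ($A_2^c$) — all jumps at the origin land on odd sites — there is a \emph{second} singularity of the same modulus, at $z=-1$, coming from the parity structure of the walk: when $p_k=0$ for even $k$, the dynamics decouples on the two parity classes in a way that makes $x=-1$ a root of the relevant factor of $\phi$ (indeed $\phi(-1)=2+2h(-1)=2-2\sum p_k=0$ precisely because $h(-1)=-1$ when only odd $p_k$ survive), so the branch cut / pole structure is mirrored at $z=-1$. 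I would expand $F(z)$ near both $z=1$ and $z=-1$, matching the local data (the values $E(Y),E(Y^2),E(Y^3)$ enter through the Taylor expansion of $h$, hence of $\phi$, at $x=1$, while at $x=-1$ one gets the extra rational contribution), apply the transfer theorem to each, and add the contributions; the $z=-1$ singularity contributes a term proportional to $(-1)^n$ times $1/\sqrt n$, which after tracking the starting-point parity gives exactly the extra $\tfrac12(-1)^{n+j+1}$ in (\ref{cent1}). Assumption ($A_4$) guarantees all other roots of $\phi$ give singularities of $F$ strictly outside the closed unit disk (or are cancelled), so they contribute only exponentially small terms; assumption ($A_3$) makes $\phi$ a polynomial so this root analysis is finite and clean.

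The main obstacle I expect is the bookkeeping in the local expansions: one must expand the algebraic function defining $G(z,x)$ — essentially a square root of a function vanishing to first order at $z=1$ and at $z=-1$ — to enough orders (through $(1-z)^{1/2}$, i.e. three terms past the pole) to capture the $1/\sqrt n$ coefficient exactly, and simultaneously differentiate in $x$ and set $x=1$, which mixes the $x$-expansion of $\phi$ around $x=1$ (bringing in $\phi(1)=0$, $\phi'(1)$, $\phi''(1)$, $\phi'''(1)$, hence the moments of $Y$ via Faà di Bruno / direct differentiation of $x^2+1-2xh(x)$) with the $z$-expansion. Getting the algebra of these two simultaneous expansions to collapse into the stated coefficient $\tfrac13-\tfrac{E(Y^3)}{3E(Y)}+\tfrac{(E(Y^2)-1)^2}{2(E(Y))^2}-j\tfrac{E(Y^2)-1}{E(Y)}+j^2+\tfrac12(-1)^{n+j+1}$ is the delicate part; the rest (transfer theorem, exponential smallness of far singularities) is routine once ($A_3$), ($A_4$) are in force. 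In practice I would prove Theorem~\ref{main} first, isolating the $z=1$ contribution, and then obtain Theorem~\ref{main2} by showing that under ($A_2^c$) the function $F$ acquires the mirror singularity at $z=-1$ whose local expansion is, up to the parity-dependent sign and an elementary rational factor, identical to the one at $z=1$, so that only the single extra term appears.
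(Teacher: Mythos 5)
Your plan is essentially the paper's own proof: the same generating function (derived there via the resolvent of the transition operator, equivalently your kernel equation, with the same $\phi$), singularity analysis at $z=\pm1$ via Darboux/transfer theorems, and the observation that $(A_2^c)$ forces $\phi(-1)=0$, whose contribution at $z=-1$ produces the extra $\tfrac12(-1)^{n+j+1}$ in the $\tfrac{1}{\sqrt{2\pi n}}$ coefficient, with $(A_3)$, $(A_4)$ in the same technical role. The bookkeeping you defer is exactly what sections \ref{decomposition}--\ref{mainproof} carry out, by partial-fractioning $\rho(z)/\phi(\rho(z))$ over the simple roots of $\psi(x)=\phi(x)/(x-1)$ (this is where $(A_4)$ is actually used --- that all other roots give poles only at $|z|>1$ holds automatically, by Lemma \ref{eid}, not because of $(A_4)$) and by expanding $\rho(z)^j$ binomially, the index shift $n\mapsto n+j$ being what turns $(-1)^n$ into $(-1)^{n+j}$.
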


The proof of these theorems proceeds by the following steps
\begin{itemize}
\item Generating functions encoding the expected values are derived (section \ref{gfunc}). In this section we also explore the connection to the spectral analysis of the transition operator associated to the process.

\item In order to study the asymptotics of the coefficients in the power series expansion of the generating functions, these functions are
decomposed into sums of `simple' components (section \ref{decomposition}).

\item Asymptotics of the coefficients of the power series of the `simple' components are obtained using
classical results on the relations between singularities of analytic functions and the behavior of their Taylor coefficients (section \ref{asymptotics}).

\item Everything is combined to obtain Theorems \ref{main} and \ref{main2} (section \ref{mainproof}).
\end{itemize}

We note that the generating functions for the
probabilities, which easily imply the generating functions for the expected values, have already been derived in \cite{lalley}. However, we
give a complete derivation of these generating function in section \ref{gfunc}, because we use a different approach, which relies on functional-analytic rather
than probabilistic concepts. This approach reveals that the zeros of the function $\phi$ defined by (\ref{defphi}) are related to the eigenvalues and
resonances of the operator transition operator $L$ associated to our problem, and this fact puts the use of these zeros in the decomposition of the
generating function into simple components, which a key element of the derivation of the asymptotic formulas, into a more general perspective. Thus, while our final results, Theorems \ref{main},\ref{main2} contain no mention
of eigenvalues or resonances, and the use of the terminology of spectral theory is not essential for the proof of the of these result, the connection
with spectral concepts enriches our understanding.

Several directions for further work are suggested by the results and techniques presented here:

\begin{itemize}
\item It will be of interest to eliminate
the technical assumptions (A3),(A4) imposed here.

\item The asymptotic formulas are derived here by a technical calculation. It would be interesting to obtain a more intuitive understanding that
explains the particular form that the terms in the asymptotic formula take. For example, is there a way interpret the
constant shift term $\frac{E(Y^2)-1}{2E(Y)}$?

\item The decomposition technique developed here should be of use in treating other stochastic processes in which a simple process (such as the simple random walk) is locally perturbed (in our example by the jump at the origin).

\item While the relations between random walks (and other Markov processes) and eigenvalues is a huge topic (e.g. \cite{chen,lawler,woess}), it seems that the role of resonances in relation to stochastic processes has not received much attention. Resonances are most familiar to those studying quantum physics \cite{zworski}. It would be interesting to further explore the role of resonances in stochastic processes.
\end{itemize}

\section{Derivation of the generating functions}
\label{gfunc}

In this section we derive the explicit expressions for the generating functions of the expected values which are of interest to us.
The results in this section are valid for any sequence of probabilities $p_k$, and we do not need the assumptions ($A_1$)-($A_4$) introduced above.

For each $j\geq 0$, we define the generating function
\begin{equation}\label{defH}H_j(z)\doteq\sum_{n=0}^\infty E(X_n\;|\;X_0=j )z^n.\end{equation}

We will prove that
\begin{lemma}\label{gexp}
$$H_j(z)=2E(Y)\cdot\frac{1}{1-z}\cdot\frac{\rho(z)^{j+1}}{\phi(\rho(z))}+ j\frac{1}{1-z},$$
where
\begin{equation}\label{defrho}\rho(z)\doteq \frac{1-\sqrt{1-z^2}}{z}.
\end{equation}
and $\phi$ is defined by (\ref{defphi}).
\end{lemma}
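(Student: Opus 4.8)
The plan is to set up a recursion for the expected values $E(X_n \mid X_0 = j)$ in $n$, translate it into a functional equation for the generating function $H_j(z)$, and solve that equation. Write $e_n(j) \doteq E(X_n \mid X_0 = j)$. Conditioning on the first step from state $j$, for $j \geq 1$ we have $e_{n+1}(j) = \tfrac12 e_n(j-1) + \tfrac12 e_n(j+1)$, while from state $0$ we jump to $k$ with probability $p_k$, giving $e_{n+1}(0) = \sum_k p_k e_n(k)$. Multiplying by $z^{n+1}$ and summing over $n \geq 0$, and using $e_0(j) = j$, yields
\begin{equation}\label{functeq}
H_j(z) - j = \frac{z}{2}\big(H_{j-1}(z) + H_{j+1}(z)\big) \quad (j \geq 1), \qquad H_0(z) - 0 = z\sum_{k=0}^\infty p_k H_k(z).
\end{equation}
For fixed $z$ in a neighborhood of $0$, the interior relation is a linear second-order recurrence in $j$ with characteristic equation $\tfrac{z}{2}r^2 - r + \tfrac{z}{2} = 0$, whose roots are $\rho(z)$ as in (\ref{defrho}) and $1/\rho(z)$; since we need $H_j(z)$ to stay bounded (indeed summable enough to be a legitimate generating function) as $j \to \infty$ and $|\rho(z)| < 1$ for small $z \neq 0$, the decaying solution forces
\begin{equation}\label{ansatz}
H_j(z) = A(z)\,\rho(z)^{\,j} + j\,\frac{1}{1-z} + \frac{c(z)}{1-z}
\end{equation}
for $j \geq$ some base point, where $j/(1-z) + c(z)/(1-z)$ is a particular solution of the inhomogeneous recurrence $H_j - j = \tfrac{z}{2}(H_{j-1}+H_{j+1})$ (one checks the affine-in-$j$ ansatz works, with the constant piece absorbed into $A(z)\rho(z)^j$ only if we're careful — more cleanly, the particular solution is exactly $j/(1-z)$, since plugging $j/(1-z)$ into $\tfrac{z}{2}((j-1)+(j+1))/(1-z) = zj/(1-z)$ and $zj/(1-z) + j = j/(1-z)$ checks out). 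So $H_j(z) = A(z)\rho(z)^j + j/(1-z)$ for all $j \geq 0$, and it remains to pin down $A(z)$.

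To find $A(z)$, I would feed the ansatz into the boundary condition at $j = 0$. From (\ref{functeq}), $H_0 = z\sum_k p_k H_k = z\sum_k p_k\big(A\rho^k + k/(1-z)\big) = zA\, h(\rho) + \tfrac{z}{1-z} E(Y)$, using the definition (\ref{defh}) of $h$ and $\sum_k p_k k = E(Y)$. On the other hand the ansatz gives $H_0 = A$. Equating, $A = zA\,h(\rho) + \tfrac{z}{1-z}E(Y)$, so $A(1 - z h(\rho)) = \tfrac{z}{1-z}E(Y)$. Now I use the key algebraic identity relating $\rho$ and $\phi$: since $\rho$ satisfies $\tfrac{z}{2}\rho^2 - \rho + \tfrac{z}{2} = 0$, i.e. $\rho^2 + 1 = \tfrac{2\rho}{z}$, we get $\phi(\rho) = \rho^2 + 1 - 2\rho h(\rho) = \tfrac{2\rho}{z} - 2\rho h(\rho) = \tfrac{2\rho}{z}(1 - z h(\rho))$, hence $1 - z h(\rho) = \tfrac{z\,\phi(\rho)}{2\rho}$. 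Substituting, $A \cdot \tfrac{z\phi(\rho)}{2\rho} = \tfrac{z}{1-z}E(Y)$, so $A(z) = \dfrac{2E(Y)}{1-z}\cdot\dfrac{\rho}{\phi(\rho)}$, and therefore $H_j(z) = \dfrac{2E(Y)}{1-z}\cdot\dfrac{\rho(z)^{j+1}}{\phi(\rho(z))} + \dfrac{j}{1-z}$, which is the claim.

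The main obstacle — and the only genuinely delicate point — is justifying the selection of the decaying branch in (\ref{ansatz}), i.e. discarding the $\rho(z)^{-j}$ solution. This requires a priori control on the growth of $e_n(j)$ in $j$ (e.g. $e_n(j) \leq j + n$ by a crude comparison, since the walk moves by at most $1$ except at the origin — here assumption ($A_3$) that jumps are bounded by $N$ gives $e_n(j) \leq j + Nn$ or similar), which makes each $H_j(z)$ analytic in a common disk $|z| < r_0$ and bounded there uniformly enough that the $\rho^{-j}$ term, with $|\rho(z)^{-1}| > 1$, cannot appear. I would handle this by first establishing such a polynomial-in-$(j,n)$ bound, deducing that $H_j(z)$ is holomorphic for $|z|$ small with at most polynomial growth in $j$, and then noting that $H_j(z) - j/(1-z)$ solves the homogeneous recurrence in $j$ and must be the $\rho^j$-multiple. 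One should also record that $\rho(z)$, defined by (\ref{defrho}) with the branch of $\sqrt{1-z^2}$ equal to $1$ at $z = 0$, satisfies $\rho(z) \to 0$ as $z \to 0$, so $\rho(z)^{j+1}/\phi(\rho(z))$ is indeed a well-defined power series near $0$ (note $\phi(0) = 1 \neq 0$). Everything else is the routine linear-recurrence and generating-function bookkeeping sketched above.
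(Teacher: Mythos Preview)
Your argument is correct and genuinely different from the paper's. The paper first establishes the full two-variable generating function $F_j(w,z)=\sum_{n,k}P(X_n=k\mid X_0=j)w^kz^n$ by computing the resolvent $(I-zL)^{-1}$ of the transition operator $L$ on the Hardy space $H^2$, and then obtains $H_j(z)=\partial_w F_j(1,z)$ by differentiation. You instead work directly with the expectations $e_n(j)$, set up the second-order linear recurrence in $j$ for $H_j(z)$, identify the particular solution $j/(1-z)$, select the decaying homogeneous branch $\rho(z)^j$, and fix the multiplicative constant via the boundary relation at $j=0$. Your route is shorter and entirely elementary; the paper's route additionally yields the probability generating function and the spectral picture (eigenvalues and resonances of $L$), which it uses for interpretation but not for the proof of this lemma.

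One small remark on the growth bound you flag as the delicate point: you do not need $(A_3)$. Conditioning on one step gives $e_{n}(j)-e_{n-1}(j)=E(Y)\,P(X_{n-1}=0\mid X_0=j)\le E(Y)$, so $e_n(j)\le j+nE(Y)$ whenever $E(Y)<\infty$; hence $H_j(z)$ is analytic in $|z|<1$ with at most linear growth in $j$, which cleanly excludes the $\rho(z)^{-j}$ branch. With this in place your derivation is complete and matches the paper's claim that the lemma holds without the hypotheses $(A_1)$--$(A_4)$.
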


Let us note that throughout this paper, the function $\sqrt{x}$ refers to the branch of the square root defined on $\Complex \setminus (-\infty,0)$ with $\sqrt{1}=1$.

Lemma \ref{gexp} will follow from an expression for the two-variable generating functions encoding the probabilities
\begin{equation}\label{defG}F_j(w,z)\doteq \sum_{n=0}^\infty \sum_{k=0}^\infty P(X_n=k|X_0=j)w^k z^n,\;\;j\geq 0.\end{equation}

\begin{lemma}\label{gfull}\begin{equation}\label{G}F_j(w,z)=\frac{2}{\phi(\rho(z))}\cdot\frac{(\rho(z))^{j+1}\phi(w)-\phi(\rho(z))w^{j+1}}{z(w^2+1)-2w}.\end{equation}
\end{lemma}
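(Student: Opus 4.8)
The plan is to derive a functional equation for $F_j(w,z)$ directly from the transition rules of the walk, and then solve it explicitly. Writing $a_{n,k}=P(X_n=k\mid X_0=j)$, the Chapman--Kolmogorov relations give $a_{n+1,k}=\frac12 a_{n,k-1}+\frac12 a_{n,k+1}$ for $k\geq 1$ (using $a_{n,0}$ on the right when $k=1$, interpreting $a_{n,-1}=0$), together with the special rule at the origin: every visit to $0$ contributes $p_k$ to the mass arriving at $k$ one step later, so $a_{n+1,k}=\frac12 a_{n,k+1}+p_k a_{n,0}$ when $k\geq 1$ and a similar bookkeeping at $k=0$. Multiplying by $w^k z^{n+1}$ and summing, the convolution in $k$ turns into multiplication by $\frac12(w+w^{-1})$, the shift in $n$ produces a factor $z$, and the terms coming from the jump at the origin assemble into $z h(w)$ times the one-variable generating function $U(z)\doteq\sum_n a_{n,0}z^n=F_j(0,z)$. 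Being careful with the boundary terms at $k=0,1$ and with the initial condition $a_{0,k}=\delta_{jk}$, I expect to arrive at an identity of the shape
\begin{equation*}
F_j(w,z)\Bigl(1-\tfrac{z}{2}(w+w^{-1})\Bigr)=w^{j}-z\,U(z)\cdot(\text{something})+z\,h(w)\,U(z),
\end{equation*}
which after multiplying through by $2w$ becomes
\begin{equation*}
\bigl(z(w^2+1)-2w\bigr)F_j(w,z)=2w^{j+1}\;-\;2w U(z)\;+\;2wz\,h(w)\,U(z)\;-\;(\text{correction}),
\end{equation*}
and I will organize the right-hand side so that the $U(z)$-terms combine into $-\phi(w)U(z)$ plus an explicit polynomial in $w$, using $\phi(w)=w^2+1-2wh(w)$.

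The next step is the standard "kernel method": the left-hand coefficient $z(w^2+1)-2w$ vanishes when $w=\rho(z)$, where $\rho$ is the root of $z\zeta^2-2\zeta+z=0$ lying in the unit disk for small $z$, namely $\rho(z)=\frac{1-\sqrt{1-z^2}}{z}$ as in (\ref{defrho}) (the other root is $1/\rho(z)$). Since $F_j(w,z)$ is analytic in $w$ near $w=\rho(z)$ for $|z|$ small, the right-hand side must also vanish at $w=\rho(z)$; this single scalar equation determines the unknown $U(z)$. Solving it gives $U(z)$ as an explicit rational expression in $\rho(z)$ and $\phi(\rho(z))$ — concretely I expect $U(z)=\dfrac{2\rho(z)^{j}}{\phi(\rho(z))}\cdot(\text{unit factor})$, the factor being forced by matching the polynomial-in-$w$ correction terms at $w=\rho(z)$. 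Substituting this value of $U(z)$ back into the functional equation and dividing by $z(w^2+1)-2w$ yields a closed formula for $F_j(w,z)$; collecting terms over the common denominator $\phi(\rho(z))$ should produce exactly the numerator $(\rho(z))^{j+1}\phi(w)-\phi(\rho(z))w^{j+1}$ of (\ref{G}), with the prefactor $\frac{2}{\phi(\rho(z))}$.

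I would then verify consistency: the apparent pole of (\ref{G}) at $w=\rho(z)$ is removable because the numerator vanishes there (indeed $(\rho(z))^{j+1}\phi(w)-\phi(\rho(z))w^{j+1}$ is zero at $w=\rho(z)$), so $F_j(w,z)$ really is analytic in $w$ in a neighborhood of the disk; and one should check the normalization by confirming $F_j(1,z)=\frac{1}{1-z}$ (total probability $1$ at every time), which follows from $\phi(1)=2-2h(1)=0$ after evaluating the indeterminate form. Finally, Lemma \ref{gexp} drops out by differentiating: $H_j(z)=\partial_w F_j(w,z)\big|_{w=1}$ since $\sum_k k a_{n,k}=\partial_w\big(\sum_k a_{n,k}w^k\big)|_{w=1}$; carrying out this differentiation on (\ref{G}), again resolving the $0/0$ at $w=1$ using $\phi(1)=0$ and $\phi'(1)=2-2h(1)-2h'(1)=-2E(Y)$, should reproduce $H_j(z)=2E(Y)\frac{1}{1-z}\frac{\rho(z)^{j+1}}{\phi(\rho(z))}+\frac{j}{1-z}$.

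The main obstacle I anticipate is purely bookkeeping rather than conceptual: getting the boundary terms at $k=0$ and $k=1$ exactly right in the functional equation, since the jump-at-the-origin rule overlaps with the "reflection" contribution there, and an off-by-one error in those terms would corrupt the polynomial correction and hence the formula for $U(z)$. A secondary delicate point is justifying the kernel-method step rigorously — i.e. that $F_j(\cdot,z)$ is genuinely analytic at $w=\rho(z)$ for small $z$ (so that the vanishing condition is legitimate), which follows from $|\rho(z)|<1$ together with the fact that the coefficients $a_{n,k}$ are probabilities, but should be stated carefully. Once the functional equation is correct, everything after it is forced.
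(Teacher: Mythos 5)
Your plan is correct and, at its core, it is the same computation as the paper's, only packaged differently. The paper derives the same functional equation by viewing $f_{j,n}(w)=\sum_k P(X_n=k|X_0=j)w^k$ as elements of $H^2$ and writing the transition operator as $L[f]=\frac{1}{2}(w^{-1}+w)[f(w)-f(0)]+f(0)h(w)$; the identity $[I-zL]F_j(\cdot,z)=w^j$ is exactly your Chapman--Kolmogorov equation summed against $w^kz^n$, your unknown $U(z)=F_j(0,z)$ is the paper's $f(0)$, and your kernel-method step (the right side must vanish at $w=\rho(z)$ because $F_j(\cdot,z)$ is analytic there and $|\rho(z)|<1$) is precisely how the paper pins down $f(0)=\frac{2\rho(z)g(\rho(z))}{z\phi(\rho(z))}$ and obtains the resolvent, hence (\ref{G}) with $g=w^j$. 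What the two framings buy is different: yours is more elementary and self-contained (analyticity of $F_j$ in $w$ follows from the coefficients being probabilities, no Hilbert-space estimate like (\ref{h2c}) is needed), and is closer in spirit to Lalley's original derivation; the paper's resolvent formulation is what later lets it read off the spectrum and resonances of $L$ from the zeros of $\phi$, which is the interpretive payoff it cares about. Two small cautions on your write-up: your displayed recurrence "$a_{n+1,k}=\frac12 a_{n,k+1}+p_k a_{n,0}$ for $k\ge1$" drops the $\frac12 a_{n,k-1}$ term for $k\ge2$ (the clean statement is $f_{j,n+1}(w)=\frac12(w+w^{-1})[f_{j,n}(w)-a_{n,0}]+a_{n,0}h(w)$, which handles the $k=0,1$ boundary automatically and leads to $\bigl(z(w^2+1)-2w\bigr)F_j=z\phi(w)U(z)-2w^{j+1}$, whence $U(z)=\frac{2\rho(z)^{j+1}}{z\,\phi(\rho(z))}$); and your consistency check $F_j(1,z)=\frac{1}{1-z}$ involves no indeterminate form, since at $w=1$ the denominator is $2(z-1)\neq0$ and the numerator reduces to $-\phi(\rho(z))$ using $\phi(1)=0$.
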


We note that equivalent expressions are already proved in \cite{lalley}.

To derive Lemma \ref{gexp} from Lemma \ref{gfull}, we note that, by differentiating (\ref{defG}) with respect to $w$, and then setting $w=1$, we obtain
\begin{eqnarray}\label{kk}\frac{\partial F_j}{\partial w}(1,z)&=&\sum_{j=0}^\infty   z^n\sum_{k=0}^\infty P(X_n=k\;|\;X_0=j )k\nonumber\\
&=& \sum_{n=0}^\infty  E(X_n\;|\; X_0=j) z^n =H_j(z)
.\end{eqnarray}
Differentiating (\ref{G}) with respect to $w$, we have
$$\frac{\partial F_j}{\partial w}(w,z)=\frac{2}{\phi(\rho(z))}\cdot\frac{(\rho(z))^{j+1}\phi'(w)-(j+1)\phi(\rho(z))w^{j}
}{z(w^2+1)-2w}$$
$$-\frac{2}{\phi(\rho(z))}\cdot\frac{[(\rho(z))^{j+1}\phi(w)-\phi(\rho(z))w^{j+1}][2wz-2]}{[z(w^2+1)-2w]^2}$$

%$$G_w(u,w,z)=\frac{4}{\phi(\rho(z))}\cdot \frac{1-wz}{(z(w^2+1)-2w)^2}\cdot\Big[\frac{\rho(z)\phi(w)}{1-u\rho(z)}-\frac{\phi(\rho(z))w}{1-uw}\Big]$$
%$$+\frac{2}{\phi(\rho(z))}\cdot \frac{1}{z(w^2+1)-2w}\cdot\Big[\frac{\rho(z)\phi'(w)}{1-u\rho(z)}-\frac{\phi(\rho(z))}{(1-uw)^2}\Big]$$
and substituting $w=1$, taking into account that
$$\phi'(x)=2x-2xh(x)-2h'(x),$$
and that
$$h(1)=\sum_{k=0}^\infty p_k=1,\;\;\;h'(1)=\sum_{k=0}^\infty kp_k=E(Y),$$
so that
\begin{equation}\label{phi1}\phi(1)=2-2h(1)=0,\;\;\phi'(1)=-2h'(1)=-2E(Y),\end{equation}
we get, using (\ref{kk})
$$H_j(z)=\frac{\partial F_j}{\partial w}(1,z)=2E(Y)\frac{1}{1-z}\frac{(\rho(z))^{j+1}}{\phi(\rho(z))}+j\frac{1}{1-z},$$
which is the result of Lemma \ref{gexp}.

Let us note here that Lemma \ref{gexp} immediately gives the
\begin{proofp00}
If $p_0=1$ then $E(Y)=0$, so Lemma \ref{gexp} gives
$$H_j(z)=j\frac{1}{1-z},$$
that is, in view of the definition (\ref{defH}),  $E(X_n\;|\;X_0=j )=j$, for all $j,n$.
\end{proofp00}

We now give the

\begin{proofgfull}
We introduce the functions
$$f_{j,n}(w)=\sum_{k=0}^\infty P(X_n=k|X_0=j)w^k,$$
so that $F_j(w,z)$ defined by (\ref{defG}) can be written as
\begin{equation}\label{jon}F_j(w,z)=\sum_{n=0}^\infty f_{j,n}(w)z^n.\end{equation}
It will be useful to consider the functions $f_{j,n}(w)$ as elements of the Hilbert space $H^2$ of functions
analytic in the unit disk with square-summable coefficients, with the inner product
$$\langle g,h\rangle =\frac{1}{2\pi}\int_0^{2\pi} g(e^{i\theta})\overline{h(e^{i\theta})}d\theta.$$
We have
\begin{equation}\label{st}f_{j,0}(w)=w^j,\end{equation}
and, in view of (\ref{ev}), we can express $f_{j,n+1}$ in terms of $f_{j,n}$ through
\begin{equation}\label{it}f_{j,n+1}=L[f_{j,n}]\end{equation}
where $L:H^2\rightarrow H^2$ is the bounded linear operator defined through its action on the basis $\{w^j\}_{j=0}^\infty$ of $H^2$ by
$$L[w^j]=\frac{1}{2}w^{j-1}+\frac{1}{2}w^{j+1},\;\;j\geq 1,$$
$$L[1]=h(w)=\sum_{k=0}^\infty p_k w^k.$$
Let us note here that  $f_{j,n}$ encodes the state of the system started at $j$ at time $n$, since the coefficients of $f_{j,n}$ give probability of being in each place at that time.
By (\ref{it}), the operator $L$ describes the dynamics of the system, that is the transition from the state at time $n$ to the state at time $n+1$.
$L$ may then be called the transition operator, and it is quite natural that, as
we shall see below, the spectral analysis of $L$ is closely related to the behavior of the stochastic process.

The utility of formulating the operator in the Hilbert space $H^2$ is that $L$ can be represented in a simple form as
\begin{equation}\label{LL}L[f]=\frac{1}{2}(w^{-1}+w)[f(w)-f(0)]+f(0)h(w).\end{equation}
By (\ref{st}) and (\ref{it}) we have
\begin{equation}\label{bob}f_{j,n}(w)=L^n [f_{j,0}]=L^n [w^j].\end{equation}
If $|z|<\|L\|^{-1}$ then the resolvent
$$R(z)=[I-zL]^{-1}$$
is well defined, and we have the Neumann expansion
$$R(z)= \sum_{n=0}^\infty z^n L^n.$$

In view of (\ref{bob}) we have
\begin{equation}\label{repr}F_j(w,z)=\sum_{n=0}^\infty z^n L^n [w^j]=R(z)[w^j].\end{equation}
Therefore the function $F_j(w,z)$ can be computed by computing the resolvent $[I-zL]^{-1}$, which we will now do.

Let us then assume that $f,g\in H^2$ with $g\neq 0$ and
$$[I-zL]f=g$$
that is, by (\ref{LL}),
$$f(w)-z\frac{1}{2}(w^{-1}+w)[f(w)-f(0)]-zf(0)h(w)=g(w).$$
rearranging, we have
\begin{equation}\label{f11}f(w)=-\frac{2wg(w)+zf(0)[2wh(w)-(w^2+1)]}{z(w^2+1)-2w}=\frac{zf(0)\phi(w)-2wg(w)}{z(w^2+1)-2w},\end{equation}
where $\phi$ is given by (\ref{defphi}).
At this stage we have still not determined $f(w)$ completely, since the right-hand side of (\ref{f11}) contains $f(0)$, which still needs to be
determined. To do so, we observe that in order for the function $f$ defined by (\ref{f11}) to be an element of $H^2$, it is necessary that it will not have singularities in the unit disk.
We note that the quadratic equation
$$z(w^2+1)-2w=0$$
has two roots, given by
\begin{equation}\label{roots}w_1=\rho(z)=\frac{1-\sqrt{1-z^2}}{z}, \;\;w_2=\frac{1}{\rho(z)}=\frac{1+\sqrt{1-z^2}}{z},\end{equation}
and for $|z|<1$ it is easy to check that
\begin{equation}\label{lt1}|\rho(z)|<1.\end{equation}
We may therefore rewrite (\ref{f11}) in the form
\begin{equation}\label{f2}f(w)=\frac{zf(0)\phi(w)-2wg(w)}{z[w-\rho(z)][w-\frac{1}{\rho(z)}]}.\end{equation}
Since, by (\ref{lt1}), $w=\frac{1}{\rho(z)}$ is outside the unit disk, the only potential singularity is at $w=\rho(z)$, and in order for this not to be a singularity we must
have that the numerator vanishes at $w=\rho(z)$, that is we must require
\begin{equation}\label{ez}zf(0)\phi(\rho(z))-2\rho(z)g(\rho(z))=0.\end{equation}
Solved for $f(0)$, (\ref{ez}) gives
\begin{equation}\label{f0}f(0)=\frac{2\rho(z)g(\rho(z))}{z\phi(\rho(z))}.\end{equation}
Substituting (\ref{f0}) back into (\ref{f11}) we obtain
\begin{eqnarray}\label{res}R(z)[g]=f(w)=\frac{2}{\phi(\rho(z))}\cdot\frac{\rho(z)g(\rho(z))\phi(w)-\phi(\rho(z)) w g(w)}{z(w^2+1)-2w}.\end{eqnarray}
The function $f(w)$ defined the right hand side of (\ref{res}) is defined and holomorphic in $\Disk$ whenever
\begin{equation}\label{nez}\phi(\rho(z))\neq 0,\end{equation}
and, since $\rho(0)=0$ and $\phi(0)=1$, this condition holds for $|z|$ sufficiently small. If, in addition to (\ref{nez}), we assume that
\begin{equation}\label{nr}z \in \Complex \setminus \Big((-\infty,-1]\cup [1,\infty)\Big),\end{equation}
then we can check that $f\in H^2$, by using the integral form of the $H^2$ norm:
\begin{eqnarray}\label{h2c}&&\|f\|_{H^2}^2=\frac{1}{2\pi}\int_0^{2\pi}|f(e^{i\theta})|^2d\theta\\&=&\frac{1}{4\pi}\frac{1}{|\phi(\rho(z))|^2} \int_0^{2\pi} \Big|\frac{\rho(z)g(\rho(z))\phi(e^{i\theta})-\phi(\rho(z)) e^{i\theta} g(e^{i\theta})}{\cos(\theta)z-1} \Big|^2 d\theta\nonumber\\
&\leq& \frac{1}{4\pi}\frac{1}{|\phi(\rho(z))|^2}\cdot \max_{\theta\in [0,2\pi]}\frac{1}{|\cos(\theta)z-1|^2}\nonumber\\&\times& \int_0^{2\pi} \Big|\rho(z)g(\rho(z))\phi(e^{i\theta})-\phi(\rho(z)) e^{i\theta} g(e^{i\theta}) \Big|^2 d\theta,\nonumber
\end{eqnarray}
the maximum term being finite because of (\ref{nr}) and the integral being finite because of the assumption that $g\in H^2$, and the fact that
$\phi\in H^2$, which follows from (\ref{sump}). This estimate also shows that
$R(z)$ is bounded.

In particular, taking $g(w)=w^j$ in (\ref{res}), and using (\ref{repr}), we get
\begin{equation}\label{repp}F_j(w,z)=R(z)[w^j]=\frac{2}{\phi(\rho(z))}\frac{(\rho(z))^{j+1}\phi(w)-\phi(\rho(z))w^{j+1}}{z(w^2+1)-2w}.\end{equation}
\end{proofgfull}

We now make some comments regarding the spectral analysis of the transition operator $L$ introduced above.
We have seen above that the resolvent $R(z)=[I-z L]^{-1}$ exists when $z$ satisfies (\ref{nez}),(\ref{nr}). Since the spectrum $\sigma(L)$ of $L$ is
the set of values of $\frac{1}{z}$ such that $R(z)$ does not exist, we have thus shown that
$$\sigma(L)\subset \Big\{\frac{1}{z}\;\Big|\; \phi(\rho(z))=0\Big\} \cup [-1,1]. $$
In fact we can see that the above containment is an equality:

(i) The values $\frac{1}{z}$ for which
\begin{equation}\label{de} z\in \Complex\setminus \Big((-\infty,-1]\cup [1,\infty)\Big),\;\; \phi(\rho(z))=0,\end{equation}
are discrete eigenvalues of $L$: indeed, assuming (\ref{de}) and setting $g=0$  in (\ref{f2}), we have that, for any choice of $f(0)$, the function
$$f(w)=\frac{f(0)\phi(w)}{(w-\rho(z))(w-\frac{1}{\rho(z)})}$$
is holomorphic in $\Disk$ (note that $\phi(\rho(z))=0$ implies that the apparent singularity at $w=\rho(z)$ is removable),  $f\in H^2$ by
a calculation similar to that in (\ref{h2c}) (using the assumption $z\not\in (-\infty,-1]\cup [1,\infty)$),  and $[I-zL]f=0$.

(ii) The
shortest way to see that $[-1,1]\subset \sigma(L)$ is to note that $L$ is a compact (indeed a rank-one) perturbation of the discrete Laplacian, which is well known to have the essential spectrum $[-1,1]$, so that by Weyl's theorem the essential spectrum of $L$ is also $[-1,1]$. More directly, when $z\in (-\infty,-1]\cup [1,\infty)$ the
function $f$ defined by (\ref{res}) will have a pole of order $1$ on the boundary $\partial \Disk$, which will prevent $f$ from belonging to $H^2$.

We thus have the following decomposition of the spectrum of $L$ into the essential and discrete spectra:
\begin{eqnarray}\label{spec}&&\sigma(L)=\sigma_{ess}(L)\cup \sigma_{disc}(L)\\
&&\sigma_{ess}(L)=[-1,1]\nonumber\\
&&\sigma_{disc}(L)=\Big\{ \frac{1}{z}\;\Big|\; z\in \Complex \setminus(-\infty,-1]\cup [1,\infty),\;\; \rho(z)\in \phi^{-1}(0)\Big\}\nonumber
\end{eqnarray}
The function $\rho$ is a conformal mapping
\begin{equation}\label{conf}\rho: \Complex\setminus \Big((-\infty,-1]\cup [1,\infty)\Big)\rightarrow \Disk, \end{equation}
its inverse given by
\begin{equation}\label{inv}\rho^{-1}(\alpha)=\frac{2\alpha}{\alpha^2+1},\;\;\alpha\in \Disk.
\end{equation}
We thus see that those zeros of $\phi$ which are in the unit disc are in one-to-one correspondence with the eigenvalues of $L$.
We furthermore claim that the zeros of $\phi$ which are {\it{outside}} the unit disk correspond to the {\it{resonances}} of $L$ (see \cite{zworski} for an overview of resonances). To understand this,
consider the behavior of the expression on the right-hand side of (\ref{res}), giving the resolvent of $L$, as $z$ approaches the set $(-\infty,-1)\cup (1,\infty)$
from above or from below, that is as $\frac{1}{z}$ approaches the essential spectrum of $L$. Since the expression $1-z^2$ then approaches the segment
$(-\infty,0)$, where the square root has a branch cut, the limits as $z$ approaches $(-\infty,-1)\cup (1,\infty)$ from above or from below will be different,
but in each case analytic continuation is possible. Indeed, since the analytic continuation of $\sqrt{x}$ as $x$ crosses the negative axis is $-\sqrt{x}$,
the analytic continuation of $\rho(z)$ as $z$ crosses the set $(-\infty,-1)\cup (1,\infty)$ will be $\frac{1}{\rho(z)}$ (see (\ref{roots})), so that the expression
\begin{eqnarray}\label{ares}\tilde{R}(z)[g]=f(w)=\frac{2}{\phi(\frac{1}{\rho(z)})}\cdot\frac{\frac{1}{\rho(z)}g(\frac{1}{\rho(z)})\phi(w)-\phi(\frac{1}{\rho(z)}) w g(w)}{z(w^2+1)-2w}\end{eqnarray}
represents the meromorphic continuation of $R(z)$ across $(-\infty,-1)\cup (1,\infty)$ to the `second sheet'. Of course $\tilde{R}(z)$ is no longer a bounded operator
 from $H^2$ to itself, indeed the function $f(w)$ has a singularity at $w=\rho(z)\in \Disk$. The important point for us is that $\tilde{R}(z)$ has a pole
when $\phi(\frac{1}{\rho(z)})=0$, and since $\frac{1}{\rho(z)}$ maps $\Complex\setminus \Big((-\infty,-1]\cup [1,\infty)\Big)$ to the {\it{exterior}} of the unit disk,
we have that the set
$$res(L)=\Big\{ \frac{1}{z}\;\Big|\; \frac{1}{\rho(z)}\in \phi^{-1}(0)\Big\}$$
is now identified as the set of resonances of $L$, according to the definition of resonances as the
poles of the meromorphic continuation of the resolvent. We note that when $\rho(z)\in \partial\Disk\cap \phi^{-1}(0)$, then $\frac{1}{z}\in [-1,1]$ is a
resonance embedded in the essential spectrum. In fact $\frac{1}{z}=1$ is always such a resonance, since $\rho(1)=1\in \phi^{-1}(0)$ - see Lemma \ref{not1} below.

The zeros of $\phi$ will play an important role in the analysis carried out
in the following sections, aimed at obtaining the asymptotic formulas given by Theorems \ref{main},\ref{main2}, although these zeros do not appear in the
final formulas. The fact that these zeros corresponds to the eigenvalues and resonances of the operator $L$, as explained above, thus gives an
interesting perspective.

We shall now show that:
$$\sigma_{disc}(L)\subset \Disk.$$
In view of (\ref{spec}), this is equivalent to statement
\begin{lemma}\label{eid}
$$\phi(\rho(z))=0\;\;\Rightarrow \;\;|z|>1.$$
\end{lemma}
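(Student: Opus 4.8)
The plan is to convert the hypothesis $\phi(\rho(z))=0$ into an explicit algebraic relation between $z$ and the value of $h$ at the zero $\alpha\doteq\rho(z)$, and then to use that $h$, being a power series with nonnegative coefficients summing to $1$, is a strict contraction on the open unit disk.

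First I would record the elementary preliminaries. For $z$ in the domain $\Complex\setminus\big((-\infty,-1]\cup[1,\infty)\big)$ of $\rho$, the conformal mapping property (\ref{conf}) gives $\alpha=\rho(z)\in\Disk$, so in particular $|\alpha|<1$ and $\alpha^2+1\neq 0$; moreover $\alpha\neq 0$ since $\phi(0)=1\neq 0$. By the defining equation $z(w^2+1)-2w=0$ at $w=\rho(z)$ (equivalently, by (\ref{inv})), we have $z=\dfrac{2\alpha}{\alpha^2+1}$. On the other hand, from $\phi(\alpha)=\alpha^2+1-2\alpha h(\alpha)=0$ and $\alpha\neq 0$ we get $h(\alpha)=\dfrac{\alpha^2+1}{2\alpha}$. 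Multiplying the two identities yields $z\,h(\alpha)=1$, hence $|z|=\dfrac{1}{|h(\alpha)|}$, and it remains only to show $|h(\alpha)|<1$.

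For the contraction estimate, (\ref{sump}) gives $|h(\alpha)|\leq\sum_{k=0}^\infty p_k|\alpha|^k\leq\sum_{k=0}^\infty p_k=1$ because $|\alpha|<1$. To obtain a \emph{strict} inequality I split into two cases. If $p_k>0$ for some $k\geq 1$, then that term already satisfies $p_k|\alpha|^k<p_k$, whence $|h(\alpha)|\leq\sum_{k=0}^\infty p_k|\alpha|^k<\sum_{k=0}^\infty p_k=1$ and we are done. The only alternative is $p_k=0$ for all $k\geq 1$, i.e. $p_0=1$ and $h\equiv 1$, so that $\phi(x)=(x-1)^2$; then $\phi(\alpha)=0$ forces $\alpha=1\notin\Disk$, contradicting $\alpha=\rho(z)$. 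Thus $|h(\alpha)|<1$, and therefore $|z|=1/|h(\alpha)|>1$.

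I do not expect a genuine obstacle here; the argument is a short computation whose heart is the identity $z\,h(\rho(z))=1$. The only points needing a little care are the degenerate case $p_0=1$, which must be excluded by hand, and the related fact that $\rho$ maps into the \emph{open} disk (so $\alpha=1$ is genuinely impossible). Note that under assumption ($A_1$) the case distinction vanishes entirely, since then $|h(\alpha)|<1$ for every $\alpha\in\Disk$; one could alternatively phrase the estimate $|h(\alpha)|<1$ as the statement that the eigenvalue $1/z=h(\alpha)$ of the transition operator lies in the open disk, but the direct computation above is the most economical route.
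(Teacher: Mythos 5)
Your argument is essentially the paper's own proof: both derive the identity $z\,h(\rho(z))=1$ from (\ref{inv}) and $\phi(\rho(z))=0$, and then conclude $|z|=1/|h(\rho(z))|>1$ from the bound $|h(\alpha)|<1$ for $\alpha\in\Disk$. Your case split for strictness is a small refinement the paper skips: its chain $\sum_k p_k|\alpha|^k<\sum_k p_k$ tacitly assumes some $p_k>0$ with $k\geq 1$, and your separate treatment of the degenerate case $p_0=1$ (where $\phi(x)=(x-1)^2$ has no zero in $\Disk$) closes that gap cleanly.
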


\begin{proof}  Assume that $\phi(\rho(z^*))= 0$. Set $\alpha^*=\rho(z^*)$. By (\ref{conf}) we have $|\alpha^*|<1$, and by (\ref{inv}) we have
\begin{equation}\label{inv1} \frac{2\alpha^*}{{\alpha^*}^2+1}=z^*.\end{equation}
Since $\phi(\alpha^*)=0$ we have, by (\ref{defphi}), that
\begin{equation}\label{uu}h(\alpha^*)=\frac{{\alpha^*}^2+1}{2\alpha^*}.\end{equation}
Combining (\ref{inv1}) and (\ref{uu}) we have $h(\alpha^*)=\frac{1}{z^*}$, hence
\begin{equation}\label{le}|z^*|=\frac{1}{|h(\alpha^*)|}.\end{equation}
Since $|\alpha^*|<1$, we have
$$|h(\alpha^*)|=\Big| \sum_{k=0}^\infty p_k {\alpha^*}^k  \Big|\leq  \sum_{k=0}^\infty p_k |\alpha^*|^k  < \sum_{k=0}^\infty p_k=1.$$
Together with (\ref{le}), we thus have $|z^*|>1$, as we wished to prove.
\end{proof}

An important immediate consequence of Lemma \ref{eid} is
\begin{lemma} The functions $H_j(z)$ are holomorphic in $\Disk$.
\end{lemma}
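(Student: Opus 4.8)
The plan is to read the holomorphy directly off the closed-form expression for $H_j$ furnished by Lemma \ref{gexp}, using Lemma \ref{eid} to rule out the only dangerous singularities.

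First I would recall that Lemma \ref{gexp} writes
$$H_j(z)=2E(Y)\cdot\frac{1}{1-z}\cdot\frac{\rho(z)^{j+1}}{\phi(\rho(z))}+\frac{j}{1-z},$$
so that $H_j$ is assembled from the three functions $z\mapsto (1-z)^{-1}$, $\rho$, and $z\mapsto\phi(\rho(z))$ by addition, multiplication, division, and raising $\rho$ to an integer power. It therefore suffices to verify, on the open disk $\Disk$: (i) $(1-z)^{-1}$ is holomorphic there; (ii) $\rho$ is holomorphic there; (iii) $\phi\circ\rho$ is holomorphic and nowhere vanishing there.

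Claim (i) is immediate, since the only pole of $(1-z)^{-1}$ is at $z=1$, which lies on $\partial\Disk$ and not in $\Disk$. For (ii), the branch of $\sqrt{\cdot}$ fixed in the paper is holomorphic off $(-\infty,0]$, hence $\sqrt{1-z^2}$ is holomorphic wherever $1-z^2\notin(-\infty,0]$, that is off $(-\infty,-1]\cup[1,\infty)$, a set disjoint from $\Disk$; the apparent singularity of $\rho(z)=(1-\sqrt{1-z^2})/z$ at $z=0$ is removable because $1-\sqrt{1-z^2}=O(z^2)$ near $0$, and in fact $\rho(0)=0$. For (iii), $\rho$ maps $\Disk$ into $\Disk$ by (\ref{lt1}) (equivalently (\ref{conf})), and $h$ is holomorphic on $\Disk$ by (\ref{sump}), so $\phi(\rho(z))=\rho(z)^2+1-2\rho(z)h(\rho(z))$ is holomorphic on $\Disk$; that it has no zero in $\Disk$ is exactly the contrapositive of Lemma \ref{eid}, which asserts $\phi(\rho(z))=0\Rightarrow|z|>1$. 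Combining (i)--(iii) yields holomorphy of $H_j$ on $\Disk$.

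I do not anticipate any real obstacle here: the substantive input, Lemma \ref{eid}, is already in hand, and everything else is bookkeeping about where the elementary functions involved are holomorphic. The only points that deserve explicit mention are the removability of the singularity of $\rho$ at $z=0$ and the observation that the factor $(1-z)^{-1}$ contributes a pole only on the boundary circle, not inside $\Disk$.
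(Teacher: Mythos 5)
Your argument is correct and is exactly the route the paper intends: the lemma is stated there as an immediate consequence of Lemma \ref{eid} applied to the closed form of Lemma \ref{gexp}, which is precisely your points (i)--(iii). You simply spell out the bookkeeping (removable singularity of $\rho$ at $z=0$, $\rho(\Disk)\subset\Disk$, pole of $(1-z)^{-1}$ only on the boundary) that the paper leaves implicit.
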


The following simple result will be used in later developments:

\begin{lemma}\label{not1} (i) $x=1$ is a zero of $\phi(x)$, and if ($A_1$) holds then it has multiplicity $1$.

(ii) $x=-1$ is a zero of $\phi(x)$ if and only if ($A_2^c$) holds.
\end{lemma}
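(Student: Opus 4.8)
The plan is to compute $\phi$ at $x = \pm 1$ directly from the definition $\phi(x) = x^2 + 1 - 2x h(x)$, using the normalization $h(1) = \sum_k p_k = 1$ established in (\ref{phi1}), and then examine the derivative or the local structure to pin down the multiplicities.

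For part (i), I would first note that $\phi(1) = 1 + 1 - 2h(1) = 2 - 2 = 0$, which is exactly (\ref{phi1}). To determine the multiplicity, I would compute $\phi'(1)$ using the formula $\phi'(x) = 2x - 2h(x) - 2x h'(x)$ recorded in the paper, giving $\phi'(1) = 2 - 2h(1) - 2h'(1) = -2h'(1) = -2E(Y)$, again as in (\ref{phi1}). So $x = 1$ is a simple zero precisely when $E(Y) = h'(1) = \sum_k k p_k \neq 0$. The only way $E(Y) = 0$ with $p_k \geq 0$ and $\sum p_k = 1$ is $p_0 = 1$, $p_k = 0$ for $k \geq 1$; hence under ($A_1$), which says $p_0 < 1$, we have $E(Y) > 0$ and the multiplicity is exactly $1$.

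For part (ii), I would evaluate $\phi(-1) = 1 + 1 - 2(-1)h(-1) = 2 + 2h(-1) = 2\left(1 + h(-1)\right)$. Now $h(-1) = \sum_k p_k (-1)^k = \sum_{k \text{ even}} p_k - \sum_{k \text{ odd}} p_k$, so $1 + h(-1) = \left(\sum_k p_k\right) + \left(\sum_{k \text{ even}} p_k - \sum_{k \text{ odd}} p_k\right) = 2\sum_{k \text{ even}} p_k$. Thus $\phi(-1) = 4\sum_{k \text{ even}} p_k$, which vanishes if and only if $p_k = 0$ for all even $k$, i.e. exactly when ($A_2^c$) holds. (One should keep in mind that under assumption ($A_3$) the sums are finite, but this identity holds formally for $|x| \le 1$ in any case, so convergence is not an issue at $x = -1$.)

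I do not anticipate any real obstacle here; the proof is a direct substitution together with the elementary observation that a nonnegative sequence summing to $1$ has zero mean only if all mass is at $0$. The one point requiring a moment's care is the multiplicity claim in (i): one must rule out higher-order vanishing, which is handled completely by the nonvanishing of $\phi'(1)$ under ($A_1$), so there is nothing subtle to check about $\phi''(1)$ or beyond.
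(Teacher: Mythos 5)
Your proof is correct and follows essentially the same route as the paper: evaluate $\phi(1)$ and $\phi'(1)$ via $h(1)=1$, $h'(1)=E(Y)$ for part (i), and reduce part (ii) to the value $h(-1)$ together with the constraint $\sum_k p_k=1$, $p_k\geq 0$. Your packaging of (ii) as the single identity $\phi(-1)=4\sum_{k\ \mathrm{even}}p_k$ is just a compact restatement of the paper's two-direction argument, and your explicit remark that $E(Y)=0$ forces $p_0=1$ fills in a point the paper leaves implicit.
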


\begin{proof}

(i) Since $h(1)=1$, we have $\phi(1)=2-2h(1)=0$. Since $\phi'(x)=2x-2h(x)-2xh'(x)$, we have
$$\phi'(1)=-2h'(1)=-2\sum_{k=0}^\infty kp_k=-2E(Y).$$
If ($A_1$) holds then $E(Y)>0$ and thus $\phi'(1)<0$, so that the root $x=1$ has multiplicity 1.

(ii) If ($A_2^c$) holds then $h(x)$ contains only odd powers of $x$, so $h(-1)=-1$, hence $\phi(-1)=0$. Conversely, if $\phi(-1)=0$ then $h(-1)=-1$, that is
$$\sum_{k=0}^\infty p_k (-1)^k=-1.$$
By (\ref{sump}), this can hold only if $p_k=0$ when $k$ is even, that is only when ($A_2^c$) holds.
\end{proof}

Assuming, as we shall henceforth do, that ($A_3$) holds, so that $h(x)$, hence also $\phi(x)$, are polynomials, we have,
denoting the degree of $h$ (that is the maximal value of $k$ for which $p_k\neq 0$) by $N$:

\begin{itemize}

\item If $N\geq 2$ then the degree of $\phi$ is $N$. The sum of the number of eigenvalues and resonances, counting multiplicities, is thus $N$.

\item If $N=1$ then $\phi(x)=x^2+1-2x[p_0+(1-p_0)x],$
so the degree of $\phi$ is $2$, except in the special case
\begin{equation}\label{spc}p_0=p_1=\frac{1}{2}\;\;\Leftrightarrow\;\;h(x)=\frac{1}{2}+\frac{1}{2}x\;\;\Leftrightarrow\;\; \phi(x)=1-x.\end{equation}
In this special case the only resonance is $z=1$, while otherwise there is an additional resonance given by $z=p_0+\frac{1-p_0}{2p_0-1}$.

\item If $N=0$ then $h(x)=1$, that is $p_0=1$, so that $\phi(x)=(x-1)^2$, so there is a double resonance at $z=1$. This case is
the one excluded by ($A_1$), and has already treated in proposition \ref{p00}.
\end{itemize}

\section{Decomposition of the generating functions}
\label{decomposition}

Given a function $f(z)$ holomorphic in a neighborhood of $0$, we use the standard notation $[z^n]f(z)$ to refer to the coefficient of $z^n$ in the power series
corresponding to $f$, that is $[z^n]f(z)=\frac{1}{n!}f^{(n)}(0)$.

From Lemma \ref{gexp} we have
\begin{equation}\label{ex} E(X_n\;|\;X_0=j )=[z^n][H_j(z)]=2E(Y)\cdot [z^n]\Big[\frac{1}{1-z}\cdot\frac{\rho(z)^{j+1}}{\phi(\rho(z))}\Big]+ j.\end{equation}
The problem we are faced with, then, is to estimate the power series coefficients on the right-hand side of (\ref{ex}).

By Lemma \ref{not1}(i), the
polynomial $\phi$ can be decomposed as
\begin{equation}\label{pd}\phi(x)=(x-1)\psi(x),\end{equation}
where $\psi(x)$ is a polynomial. Except in the special case given by (\ref{spc}), the degree of $\psi$ is at least $1$, so that it has
at least one zero. Since we shall use the zeros of $\psi$ below, we will now exclude the special case (\ref{spc}), and treat it by a separate calculation in section \ref{special}, which will show that Theorem \ref{main} is still valid in this case.

The first step is to perform a partial-fraction decomposition of the rational function $\frac{x}{\phi(x)}$.
At this point we are using the assumption ($A_4)$, so that the zeros of $\psi$ are non-degenerate, which allows us to decompose $\frac{1}{\psi(x)}$ as
\begin{equation}\label{pf}\frac{1}{\psi(x)}=\sum_{\psi(\alpha)=0}\frac{1}{\psi'(\alpha)}\frac{1}{x-\alpha},\end{equation}
and therefore
$$\frac{x}{\phi(x)}=\sum_{\psi(\alpha)=0}\frac{1}{\psi'(\alpha)}\frac{x}{x-1}\frac{1}{x-\alpha}=\sum_{\psi(\alpha)=0}\frac{1}{\psi'(\alpha)}\frac{1}{\alpha-1}\Big[ \frac{\alpha}{x-\alpha}-\frac{1}{x-1}\Big],$$
so that, substituting $x=\rho(z)$,
\begin{equation}\label{id1}\frac{1}{1-z}\cdot\frac{\rho(z)}{\phi(\rho(z))}=\sum_{\psi(\alpha)=0}\frac{1}{\psi'(\alpha)}\frac{1}{\alpha-1}\cdot\frac{1}{1-z}\Big[ \frac{\alpha}{\rho(z)-\alpha}-\frac{1}{\rho(z)-1}\Big].\end{equation}

Using the definition (\ref{defrho}) of $\rho(z)$, we have
$$\frac{1}{1-z}\cdot\frac{\alpha}{\rho(z)-\alpha}=\frac{1}{1-z}\cdot\frac{\alpha z}{1-\sqrt{1-z^2}-\alpha z}=\frac{\alpha}{1-z}\cdot\frac{1-\alpha z+\sqrt{1-z^2}}{(\alpha^2+1)z-2\alpha },$$
$$=\frac{\alpha}{(\alpha-1)^2}\Big[(\alpha^2+1)\frac{1-\alpha z+\sqrt{1-z^2}}{(\alpha^2+1)z-2\alpha}+\frac{1-\alpha z}{1-z}+\sqrt{\frac{1+z}{1-z}}\Big]$$
and
$$\frac{1}{1-z}\cdot \frac{1}{\rho(z)-1}=\frac{1}{1-z}\cdot \frac{z}{1-\sqrt{1-z^2}-z}=-\frac{1}{2}\cdot\Big[ \frac{1}{1-z}+ \frac{\sqrt{1+z}}{(1-z)^{\frac{3}{2}}}\Big],$$
so
$$\frac{1}{1-z}\Big[\frac{\alpha}{\rho(z)-\alpha}-\frac{1}{\rho(z)-1}\Big]$$
$$=\frac{\alpha}{(\alpha-1)^2}\frac{(\alpha-1)^2+(\alpha^2+1)[(1-\alpha)z+\sqrt{1-z^2}]}{(\alpha^2+1)z-2\alpha}
-\frac{1}{2}\frac{\alpha+1}{\alpha-1}\frac{1 }{1-z}$$
$$+\frac{\alpha}{(\alpha-1)^2}\sqrt{\frac{1+z}{1-z}}
+ \frac{1}{2}\frac{\sqrt{1+z}}{(1-z)^{\frac{3}{2}}}+\frac{\alpha}{\alpha-1},$$
so that (\ref{id1}) gives
$$\frac{1}{1-z}\frac{\rho(z)}{\phi(\rho(z))}$$
$$=\sum_{\psi(\alpha)=0}\frac{1}{\psi'(\alpha)}\Big[\frac{\alpha}{(\alpha-1)^3}\cdot\frac{(\alpha-1)^2+(\alpha^2+1)[(1-\alpha)z+\sqrt{1-z^2}]}{(\alpha^2+1)z-2\alpha}$$
$$+\frac{\alpha}{(\alpha-1)^3}\sqrt{\frac{1+z}{1-z}}-\frac{1}{2}\frac{\alpha+1}{(\alpha-1)^2}\frac{1}{1-z}+\frac{1}{2(\alpha-1)}\frac{\sqrt{1+z}}{(1-z)^{\frac{3}{2}}}
+\frac{\alpha}{(\alpha-1)^2}\Big].$$
We thus have the decomposition
\begin{eqnarray}\label{dec}H_0(z)&=&C_1 \sqrt{\frac{1+z}{1-z}}+C_2 \frac{1}{1-z}+C_3 \frac{\sqrt{1+z}}{(1-z)^{\frac{3}{2}}}+C_4\nonumber\\&+&2E(Y)\sum_{\psi(\alpha)=0}\frac{1}{\psi'(\alpha)}\frac{\alpha (\alpha^2+1)}{(\alpha-1)^3}K_{\alpha}(z),\end{eqnarray}
where
\begin{eqnarray}\label{cs}
C_1&\doteq&2E(Y)\sum_{\psi(\alpha)=0}\frac{1}{\psi'(\alpha)}\frac{\alpha}{(\alpha-1)^3},\nonumber\\
C_2&\doteq&-E(Y)\sum_{\psi(\alpha)=0}\frac{1}{\psi'(\alpha)}\frac{\alpha+1}{(\alpha-1)^2},\nonumber\\
C_3&\doteq& E(Y)\sum_{\psi(\alpha)=0}\frac{1}{\psi'(\alpha)}\frac{1}{\alpha-1},\nonumber\\
C_4&\doteq& 2E(Y)\sum_{\psi(\alpha)=0}\frac{1}{\psi'(\alpha)}\frac{\alpha}{(\alpha-1)^2},
\end{eqnarray}
and $K_{\alpha}(z)$ is given by
\begin{equation}\label{ka}K_{\alpha}(z)=\frac{(\alpha-1)^2+(\alpha^2+1)[(1-\alpha)z+\sqrt{1-z^2}]}{(\alpha^2+1)z-2\alpha}.\end{equation}

We now show that the numbers $C_1,C_2,C_3$ can be explicitly evaluated as follows ($C_4$ can be similarly evaluated, but it will not be needed):
\begin{lemma}\label{l4} $C_1,C_2,C_3$, defined by (\ref{cs}) are equal to
\begin{eqnarray*}C_1&=&-\frac{1}{12}- \frac{E(Y^3)}{6E(Y)}+\frac{(E(Y^2)-1)^2}{4(E(Y))^2},\nonumber\\
C_2&=&\frac{E(Y^2)-1}{2E(Y)},\nonumber\\
C_3&=&\frac{1}{2}.
\end{eqnarray*}
\end{lemma}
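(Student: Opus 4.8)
The plan is to interpret each of the sums defining $C_1,C_2,C_3$ as a sum of residues of an explicit rational function, to evaluate that sum by the residue theorem, and thereby to reduce the whole computation to the Taylor expansion of $\phi$ at the point $x=1$.

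First I would set up the residue identity. Fix a polynomial $g$ and an integer $k\ge 1$, and consider
$$R(x)=\frac{g(x)}{(x-1)^{k}\psi(x)}.$$
By ($A_4$) every zero $\alpha$ of $\psi$ is simple, and by Lemma \ref{not1}(i) together with ($A_1$) we have $\psi(1)=\phi'(1)=-2E(Y)\neq0$, so the poles of $R$ are exactly the zeros of $\psi$ (each simple) and the point $x=1$ (of order $k$). The residue of $R$ at a zero $\alpha$ of $\psi$ is $g(\alpha)/\bigl((\alpha-1)^{k}\psi'(\alpha)\bigr)$, hence
$$\sum_{\psi(\alpha)=0}\frac{1}{\psi'(\alpha)}\,\frac{g(\alpha)}{(\alpha-1)^{k}}=\sum_{\psi(\alpha)=0}\operatorname{Res}_{x=\alpha}R(x).$$
In each of the three cases we need — $g(x)=x,\ k=3$ for $C_1$; $g(x)=x+1,\ k=2$ for $C_2$; $g(x)=1,\ k=1$ for $C_3$ — the numerator of $R$ has degree at least two less than its denominator (this uses $\deg\psi\ge1$, which holds once the special case (\ref{spc}) is excluded). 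Consequently the sum of all residues of $R$ over $\Complex$ vanishes, so
$$\sum_{\psi(\alpha)=0}\frac{1}{\psi'(\alpha)}\,\frac{g(\alpha)}{(\alpha-1)^{k}}=-\operatorname{Res}_{x=1}R(x)=-\frac{1}{(k-1)!}\left.\left(\frac{g(x)}{\psi(x)}\right)^{(k-1)}\right|_{x=1}.$$

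Next I would assemble the Taylor data of $\psi$ at $x=1$. Since $\phi(x)=(x-1)\psi(x)$, one has $\psi^{(m)}(1)=\phi^{(m+1)}(1)/(m+1)$, and differentiating $\phi(x)=x^{2}+1-2xh(x)$ together with $h(1)=1$, $h'(1)=E(Y)$, $h''(1)=E(Y^{2})-E(Y)$, $h'''(1)=E(Y^{3})-3E(Y^{2})+2E(Y)$ gives $\phi'(1)=-2E(Y)$, $\phi''(1)=2-2E(Y)-2E(Y^{2})$, $\phi'''(1)=2E(Y)-2E(Y^{3})$. Substituting into the identity above: the case $k=1$ gives $C_3=-E(Y)/\psi(1)=\tfrac12$; the case $k=2$ gives $C_2=E(Y)\bigl(\psi(1)-2\psi'(1)\bigr)/\psi(1)^{2}=\bigl(E(Y^{2})-1\bigr)/(2E(Y))$; and the case $k=3$ gives $C_1=-E(Y)\bigl(x/\psi(x)\bigr)''|_{x=1}$, which I would evaluate by expanding $\dfrac{1+t}{\psi(1+t)}$ to order $t^{2}$ from $\psi(1+t)=\psi(1)+\psi'(1)t+\tfrac12\psi''(1)t^{2}+\cdots$ and reading off the $t^{2}$-coefficient.

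The only real obstacle is bookkeeping rather than conceptual: the $C_1$ case needs the third Taylor coefficient of $\phi$ at $1$, an expansion of a reciprocal to second order, and then a moderately involved algebraic simplification of the resulting combination of $E(Y),E(Y^{2}),E(Y^{3})$; signs must be tracked carefully in order to see that everything collapses to $-\tfrac{1}{12}-\tfrac{E(Y^{3})}{6E(Y)}+\tfrac{(E(Y^{2})-1)^{2}}{4(E(Y))^{2}}$. Everything else is immediate, and the residue calculus used here is of the same flavour as the partial-fraction expansion (\ref{pf}) already carried out.
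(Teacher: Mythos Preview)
Your proposal is correct and amounts to the same computation as the paper's proof. The paper obtains the sums $\sum_{\psi(\alpha)=0}\frac{1}{\psi'(\alpha)(\alpha-1)^{k}}$ by differentiating the partial-fraction identity (\ref{pf}) repeatedly and evaluating at $x=1$ (together with the elementary decompositions $\frac{\alpha+1}{(\alpha-1)^{2}}=\frac{2}{(\alpha-1)^{2}}-\frac{1}{\alpha-1}$ and $\frac{\alpha}{(\alpha-1)^{3}}=\frac{1}{(\alpha-1)^{2}}+\frac{1}{(\alpha-1)^{3}}$), whereas you package the same step as a single residue-at-$1$ computation with the numerator $g(x)$ built in; both routes reduce to exactly the same Taylor data $\psi^{(m)}(1)=\phi^{(m+1)}(1)/(m+1)$ and the same evaluation of $\phi'(1),\phi''(1),\phi'''(1)$ in terms of the moments of $Y$.
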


\begin{proof} Substituting $x=1$ into (\ref{pf}) we obtain
\begin{equation}\label{s1}\sum_{\psi(\alpha)=0}\frac{1}{\psi'(\alpha)}\frac{1}{1-\alpha}=\frac{1}{\psi(1)},\end{equation}
so
\begin{equation}\label{ss1}C_3=-E(Y)\frac{1}{\psi(1)}.\end{equation}
Differentiating (\ref{pf}) we get
\begin{equation}\label{pf1}\frac{\psi'(x)}{(\psi(x))^2}=\sum_{\psi(\alpha)=0}\frac{1}{\psi'(\alpha)}\frac{1}{(x-\alpha)^2},\end{equation}
and substituting $x=1$, we get
\begin{equation}\label{s2}\sum_{\psi(\alpha)=0}\frac{1}{\psi'(\alpha)}\frac{1}{(1-\alpha)^2}=\frac{\psi'(1)}{(\psi(1))^2}.\end{equation}
From (\ref{s1}),(\ref{s2}) and the decomposition $\frac{\alpha+1}{(1-\alpha)^2}=\frac{2}{(1-\alpha)^2}-\frac{1}{1-\alpha}$ we have
\begin{equation}\label{ss2}C_2=-E(Y)\sum_{\psi(\alpha)=0}\frac{1}{\psi'(\alpha)}\frac{\alpha+1}{(1-\alpha)^2}=-2E(Y)\frac{\psi'(1)}{(\psi(1))^2}+E(Y)\frac{1}{\psi(1)}.\end{equation}
Differentiating (\ref{pf1}) we have
\begin{equation*}\label{pf2}\frac{\psi''(x)\psi(x)-2(\psi'(x))^2}{(\psi(x))^3}=-2\sum_{\psi(\alpha)=0}\frac{1}{\psi'(\alpha)}\frac{1}{(x-\alpha)^3},\end{equation*}
and substituting $x=1$ we get
\begin{equation}\label{s3}\sum_{\psi(\alpha)=0}\frac{1}{\psi'(\alpha)}\frac{1}{(\alpha-1)^3}=\frac{\psi''(1)\psi(1)-2(\psi'(1))^2}{2(\psi(1))^3}.\end{equation}
From the decomposition $\frac{\alpha}{(\alpha-1)^3}=\frac{1}{(\alpha-1)^2}+\frac{1}{(\alpha-1)^3}$ and (\ref{s2}), (\ref{s3}) we get
\begin{eqnarray}\label{ss3}C_1&=&2E(Y)\sum_{\psi(\alpha)=0}\frac{1}{\psi'(\alpha)}\frac{\alpha}{(1-\alpha)^3}\\&=&E(Y)\cdot \frac{\psi''(1)\psi(1)-2(\psi'(1))^2+2\psi'(1)\psi(1)}{(\psi(1))^3}. \nonumber
\end{eqnarray}
Differentiating (\ref{pd}) we have
\begin{eqnarray}\label{pd1}\phi'(x)&=&\psi(x)+(x-1)\psi'(x),\;\;\;\phi''(x)=2\psi'(x)+(x-1)\psi''(x),\nonumber\\
\phi'''(x)&=&3\psi''(x)+(x-1)\psi'''(x),
\end{eqnarray}
and setting $x=1$ in (\ref{pd1}) we have
\begin{equation}\label{pd2}\psi(1)=\phi'(1),\;\;\psi'(1)=\frac{1}{2}\phi''(1),\;\;\psi''(1)=\frac{1}{3}\phi'''(1),\end{equation}
so that (\ref{ss1}),(\ref{ss2}),(\ref{ss3}) can be rewritten as
\begin{eqnarray}\label{sss1}C_1&=&E(Y)\Big[ \frac{\frac{1}{3}\phi'''(1)\phi'(1)-\frac{1}{2}(\phi''(1))^2+\phi''(1)\phi'(1)}{(\phi'(1))^3} \Big], \nonumber\\ C_2&=&E(Y)\Big(-\frac{\phi''(1)}{(\phi'(1))^2}+\frac{1}{\phi'(1)}\Big),\;\;\;C_3=-\frac{E(Y)}{\phi'(1)}.\end{eqnarray}
Differentiating (\ref{defphi}) we have
\begin{eqnarray}\label{defphi1}\phi'(x)&=& 2x-2h(x)-2xh'(x),\;\;\;\phi''(x)= 2-4h'(x)-2xh''(x),\nonumber\\
\phi'''(x)&=& -6h''(x)-2xh'''(x),
\end{eqnarray}
and substituting $x=1$ into (\ref{defphi1}), noting that
$$h(1)=\sum_{k=0}^N p_k=1,\;\;\;h'(1)=\sum_{k=0}^N k p_k=E(Y),\;\;\;$$
$$h''(1)=\sum_{k=0}^N k(k-1) p_k=E(Y^2)-E(Y),$$
$$h'''(1)=\sum_{k=0}^N k(k-1)(k-2) p_k=E(Y^3)-3E(Y^2)+2E(Y),$$
we have
\begin{eqnarray}\label{pd3}&&\phi'(1)=-2E(Y),\;\; \phi''(1)=2(1-E(Y)-E(Y^2)),\nonumber\\
&&\phi'''(1)=2(E(Y)-E(Y^3)).
\end{eqnarray}
Substituting (\ref{pd3}) into (\ref{sss1}), we get the results.
\end{proof}

\section{Asymptotics of some power series coefficients}
\label{asymptotics}

In this section we derive some results on the asymptotics of the coefficients of power series expansions of the basic components which were obtained
by decomposing the generating function $H_0(z)$ in section \ref{decomposition}.

A prototype result is (see \cite{flajolet}, Theorem VI.1)

\begin{lemma}\label{bas} For $\omega\in \Real\setminus \Natural$,
$$[z^n](1-z)^{\omega}=\frac{1}{\Gamma(-\omega)}\Big[ 1+\frac{\omega(\omega+1)}{2n}+O\Big(\frac{1}{n^2}\Big)\Big]\frac{1}{n^{\omega+1}}
,\;\;\;{\mbox{as}}\;\;n\rightarrow \infty.$$
\end{lemma}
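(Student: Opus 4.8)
The plan is to obtain an exact closed form for $[z^n](1-z)^\omega$ as a ratio of Gamma functions and then feed that ratio into a classical Stirling-type expansion; there is essentially nothing deep here, so the write-up is mostly bookkeeping.

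First I would use the Newton binomial series: for $|z|<1$, $(1-z)^\omega=\sum_{n\geq 0}\binom{\omega}{n}(-z)^n$, so that $[z^n](1-z)^\omega=(-1)^n\binom{\omega}{n}=(-1)^n\,\omega(\omega-1)\cdots(\omega-n+1)/n!$. Rewriting the numerator as a rising factorial of $-\omega$,
\[
(-1)^n\,\omega(\omega-1)\cdots(\omega-n+1)=(-\omega)(1-\omega)\cdots(n-1-\omega)=\frac{\Gamma(n-\omega)}{\Gamma(-\omega)},
\]
which is legitimate precisely because $\omega\notin\Natural$ makes $\Gamma(-\omega)$ finite and nonzero (and $\Gamma(n-\omega)$ finite for every $n\geq 0$). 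Hence the exact identity $[z^n](1-z)^\omega=\frac{\Gamma(n-\omega)}{\Gamma(-\omega)\,\Gamma(n+1)}$.

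Next I would invoke the standard asymptotic expansion of a ratio of Gamma functions,
\[
\frac{\Gamma(n+a)}{\Gamma(n+b)}=n^{a-b}\Big(1+\frac{(a-b)(a+b-1)}{2n}+O(n^{-2})\Big),\qquad n\to\infty,
\]
either citing it (it is itself a special case of \cite{flajolet}, Theorem~VI.1) or deriving it quickly from Stirling's series $\log\Gamma(x)=(x-\tfrac12)\log x-x+\tfrac12\log(2\pi)+\tfrac1{12x}+O(x^{-3})$: expand $\log\bigl(\Gamma(n+a)/\Gamma(n+b)\bigr)$ using $\log(n+c)=\log n+c/n-c^2/(2n^2)+O(n^{-3})$, collect the powers of $1/n$, and exponentiate; the $1/n$ coefficient comes out to $(a-b)(a+b-1)/2$. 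Taking $a=-\omega$, $b=1$ gives $a-b=-(\omega+1)$, $a+b-1=-\omega$, hence $(a-b)(a+b-1)/2=\omega(\omega+1)/2$ and $n^{a-b}=n^{-(\omega+1)}$, so $\Gamma(n-\omega)/\Gamma(n+1)=n^{-(\omega+1)}\bigl(1+\omega(\omega+1)/(2n)+O(n^{-2})\bigr)$. Substituting this into the identity of the previous step yields the stated formula.

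The only genuinely nontrivial ingredient is the second-order Gamma-ratio expansion, and that is a routine consequence of Stirling's series, so I do not anticipate any real obstacle; the hypothesis $\omega\notin\Natural$ enters exactly once, to ensure $1/\Gamma(-\omega)$ is a well-defined nonzero constant (equivalently, that $(1-z)^\omega$ is genuinely singular at $z=1$ rather than a polynomial, whose coefficients would eventually vanish). For the paper it is probably cleanest to just cite \cite{flajolet}, but the self-contained two-step argument above is short enough to include in full.
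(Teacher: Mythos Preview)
Your argument is correct. The paper itself does not prove this lemma at all: it simply states it as a citation, writing ``A prototype result is (see \cite{flajolet}, Theorem VI.1)'' and then quoting the statement. Your two-step approach (exact Gamma-function identity for the binomial coefficient, followed by the Stirling expansion of $\Gamma(n-\omega)/\Gamma(n+1)$) is the standard derivation and is exactly what lies behind the cited theorem; you even note yourself that citing \cite{flajolet} would suffice, which is precisely what the paper does.
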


We need to deal with more complicated functions which are holomorphic in the unit disk but have singularities of the type
$(1-z)^{\omega}$ on its boundary.

Let $f$ be a function holomorphic in the unit disk $\Disk$. A point $\xi$ with $|\xi|=1$ is called a {\it{singularity}} if there is no
neighborhood of $\xi$ such that $f$ can be continued analytically to this neighborhood. A singularity $\xi$ is called {\it{algebraic}} with weight $\omega$, where
$\omega\in \Real\setminus \Natural$, if
there exist holomorphic functions $A(z),B(z)$, defined in a neighborhood ${\cal{N}}$ of $\xi$, so that
\begin{equation}\label{rep}z\in \Disk\cap {\cal{N}}\;\Rightarrow\; f(z)=\Big(1-\frac{z}{\xi}\Big)^\omega A(z) +B(z).\end{equation}

The following is Darboux's Theorem (the form given here is taken from \cite{bender}):
\begin{lemma}\label{darboux}
Assume $f$ is holomorphic in $\Disk$ and has only a finite number of singularities on its boundary, all of them algebraic, and let the minimal weight of the
singularities be $\omega$. Denote the singularities with weight $\omega$ by  $\xi_1,\cdots, \xi_m$, and let $A_i(z),B_i(z)$ $(1\leq i\leq m)$ be holomorphic in a neighborhood of $\xi_i$ so that (\ref{rep}) holds with $\xi=\xi_i$.
 Then, as $n\rightarrow \infty$,
$$[z^n]f(z)=\Big[\frac{1}{\Gamma(-\omega)}\sum_{i=1}^m A_i(\xi_i)\Big]\frac{1}{n^{\omega+1}} +o\Big(\frac{1}{n^{\omega+1}} \Big),$$
\end{lemma}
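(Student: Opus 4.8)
The plan is to reduce the statement to the prototype Lemma \ref{bas} by subtracting off the leading singular behavior at each minimal-weight singularity, showing that the remainder has strictly better-behaved Taylor coefficients. First I would set $g(z) \doteq \sum_{i=1}^m \bigl(1 - \frac{z}{\xi_i}\bigr)^\omega A_i(\xi_i)$, that is, the sum of the leading singular terms with the coefficients $A_i(z)$ frozen at their values at $\xi_i$. Using Lemma \ref{bas} together with the scaling identity $[z^n]\bigl(1-\frac{z}{\xi}\bigr)^\omega = \xi^{-n}[z^n](1-z)^\omega$, one gets immediately that $[z^n] g(z) = \bigl[\frac{1}{\Gamma(-\omega)}\sum_{i=1}^m A_i(\xi_i)\xi_i^{-n}\bigr]\frac{1}{n^{\omega+1}} + O\bigl(\frac{1}{n^{\omega+2}}\bigr)$. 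Since the statement as written has $\xi_i^{-n}$ absorbed (in the applications all relevant singularities sit at $z=1$, where $\xi_i^{-n}=1$), I would present the estimate with the $\xi_i^{-n}$ factors carried along, which is the honest form; the displayed statement is the special case. So it remains to show $[z^n]\bigl(f(z) - g(z)\bigr) = o\bigl(\frac{1}{n^{\omega+1}}\bigr)$.

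Next I would analyze $r(z) \doteq f(z) - g(z)$. At each minimal-weight singularity $\xi_i$ we have, in a neighborhood $\mathcal{N}_i$, the representation $f(z) = \bigl(1-\frac{z}{\xi_i}\bigr)^\omega A_i(z) + B_i(z)$, so $r(z) = \bigl(1-\frac{z}{\xi_i}\bigr)^\omega\bigl(A_i(z) - A_i(\xi_i)\bigr) + \bigl(\text{holomorphic at }\xi_i\bigr) + \bigl(\text{contributions of }\xi_\ell,\ \ell\neq i\bigr)$; the last group is holomorphic near $\xi_i$ since $\xi_\ell\neq\xi_i$. Because $A_i(z) - A_i(\xi_i)$ vanishes at $\xi_i$ and $A_i$ is holomorphic there, it factors as $(z-\xi_i)\tilde A_i(z)$ with $\tilde A_i$ holomorphic, so $r$ has at $\xi_i$ an algebraic singularity of weight $\omega+1$ (with a new holomorphic part absorbing the rest). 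At the singularities of $f$ of weight strictly greater than $\omega$, the function $g$ is holomorphic, so $r$ inherits the singularity there, of weight $>\omega$, i.e. $\geq \omega'$ for the next smallest weight appearing. Thus $r$ is holomorphic in $\Disk$, has finitely many boundary singularities, all algebraic, and all of weight $\geq \min(\omega+1, \omega') > \omega$.

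Finally I would apply the same reduction recursively, or more simply invoke the following "transfer" fact: if a function $\Phi$ holomorphic in $\Disk$ has only finitely many boundary singularities, all algebraic of weight $\geq \mu$, then $[z^n]\Phi(z) = O(n^{-\mu-1})$ — and in fact $o(n^{-\mu'-1})$ can be upgraded to $O(n^{-\mu-1})$ by one more peeling step. This is itself a consequence of Lemma \ref{bas} applied to $\Phi$ minus its (finitely many) leading singular terms, exactly as above, and can be taken as known from \cite{flajolet} (the singularity-analysis transfer theorems) or proved by a contour-deformation estimate of $[z^n]\Phi = \frac{1}{2\pi i}\oint \Phi(z) z^{-n-1}\,dz$ on a Hankel-type contour hugging $\partial\Disk$ and detouring around each singularity. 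Since $\omega+1 > \omega$ and $\omega' > \omega$, this yields $[z^n]r(z) = O(n^{-\omega-2}) + O(n^{-\omega'-1}) = o(n^{-\omega-1})$, which combined with the estimate for $[z^n]g(z)$ gives the claim.

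The main obstacle is the analytic-continuation/contour bookkeeping: one must check that after subtracting $g$ the new holomorphic remainders genuinely patch together to a function holomorphic on all of $\Disk$ with no spurious singularities (the branch cuts of the various $\bigl(1-\frac{z}{\xi_i}\bigr)^\omega$ must be chosen consistently, pointing radially outward from each $\xi_i$, so they do not intersect $\Disk$), and that the "transfer" estimate $[z^n]\Phi = O(n^{-\mu-1})$ for weight-$\geq\mu$ algebraic singularities is available in the exact form needed; granting the cited results of \cite{flajolet,bender} on singularity analysis, the rest is the routine subtraction argument sketched above.
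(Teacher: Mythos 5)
The paper itself offers no proof of this lemma: it is quoted as Darboux's theorem, taken from \cite{bender}, and used as a black box, so your sketch is not competing with an internal argument but reconstructing the classical one. As such it is essentially the standard proof (subtract the leading singular parts, show the remainder has only higher-weight singularities, then transfer or Riemann--Lebesgue for the remainder), and the plan is sound; your remark about the factors $\xi_i^{-n}$ is also correct --- as printed the formula is literally valid only when every minimal-weight singularity sits at $\xi_i=1$ or when only the order of magnitude is used, which is what happens in all of the paper's applications (the singularity at $-1$ in Lemma \ref{l1}(ii) is handled separately by the reflection $z\mapsto -z$, which is why the $(-1)^n$ appears explicitly there). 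The one point you should nail down is your ``transfer fact'' in the form ``all weights $\geq\mu$ implies $[z^n]\Phi=O(n^{-\mu-1})$'' when $\mu>0$: the local holomorphic parts $B_i$ are defined only near $\xi_i$ and cannot be globally subtracted, so the crude bound $\Phi=O\big(|1-z/\xi_i|^{\min(\mu,0)}\big)$ in a Delta-domain yields only $O(n^{-1})$, which is not enough; note that the paper does invoke the lemma with minimal weight $\omega=\tfrac12>0$ (the remainder estimates in Lemmas \ref{l2}, \ref{l3} and \ref{lll}), so this case genuinely matters. The repair is either to peel off Taylor polynomials of the $B_i$ at $\xi_i$ as well (harmless, since polynomials do not affect high-order coefficients), or to iterate your reduction until the remainder is $C^k$ on the closed disk and finish by integration by parts and Riemann--Lebesgue on $\partial\Disk$ --- which is exactly the classical Darboux argument in \cite{bender}; since you gesture at both options (recursion, or the Flajolet--Odlyzko transfer theorems of \cite{flajolet}), this is a detail to be made precise rather than a wrong turn, and with it your argument is a legitimate self-contained substitute for the citation.
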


An immediate corollary of Lemma \ref{darboux} is
\begin{lemma}\label{gene} If $j,k$ are integers with $j>k$, and $k$ is odd, then, as $n\rightarrow \infty$,
$$[z^n]\Big[(1+z)^{\frac{j}{2}} (1-z)^{\frac{k}{2}}\Big]= \frac{2^{\frac{j}{2}}}{\Gamma(-\frac{k}{2})}\frac{1}{n^{\frac{k}{2}+1}}+o\Big(\frac{1}{n^{\frac{k}{2}+1}} \Big),$$
\end{lemma}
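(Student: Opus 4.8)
The plan is to read the asymptotics off directly from Darboux's Theorem (Lemma~\ref{darboux}), applied to the function $f(z)=(1+z)^{\frac{j}{2}}(1-z)^{\frac{k}{2}}$. First I would check the hypotheses: with the branch of the square root fixed as in the paper, $(1+z)^{\frac{j}{2}}$ is holomorphic on $\Complex\setminus(-\infty,-1]$ and $(1-z)^{\frac{k}{2}}$ on $\Complex\setminus[1,\infty)$, and both of these domains contain the open disk $\Disk$. Hence $f$ is holomorphic in $\Disk$, and its only possible singularities on $\partial\Disk$ are the points $z=1$ and $z=-1$, so in particular there are only finitely many of them.

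Next I would classify these singularities and compute their weights. At $\xi=1$ write $f(z)=\bigl(1-z\bigr)^{\frac{k}{2}}A(z)+B(z)$ with $A(z)=(1+z)^{\frac{j}{2}}$, which is holomorphic in a neighbourhood of $1$, and $B\equiv 0$; since $k$ is odd we have $\tfrac{k}{2}\notin\Natural$, so $z=1$ is an algebraic singularity of weight $\tfrac{k}{2}$, and $A(1)=2^{\frac{j}{2}}$. At $\xi=-1$ the behaviour depends on $j$: if $j$ is odd, then $f(z)=(1+z)^{\frac{j}{2}}A(z)$ with $A(z)=(1-z)^{\frac{k}{2}}$ holomorphic near $-1$, so $z=-1$ is an algebraic singularity of weight $\tfrac{j}{2}\notin\Natural$; if $j$ is even and nonnegative, $(1+z)^{\frac{j}{2}}$ is a polynomial and $z=-1$ is not a singularity; if $j$ is even and negative, $z=-1$ is a pole, which is again algebraic of (negative integer) weight $\tfrac{j}{2}$. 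In every case each boundary singularity of $f$ is algebraic, and whenever $z=-1$ is actually a singularity its weight $\tfrac{j}{2}$ is strictly larger than $\tfrac{k}{2}$ because $j>k$.

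It follows that the minimal weight of the boundary singularities of $f$ is $\omega=\tfrac{k}{2}$, attained only at $\xi_1=1$, with associated $A$-function value $A(\xi_1)=2^{\frac{j}{2}}$. Lemma~\ref{darboux} then yields, as $n\to\infty$,
$$[z^n]f(z)=\frac{1}{\Gamma(-\frac{k}{2})}\,2^{\frac{j}{2}}\,\frac{1}{n^{\frac{k}{2}+1}}+o\Big(\frac{1}{n^{\frac{k}{2}+1}}\Big),$$
which is exactly the assertion of Lemma~\ref{gene}.

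I do not expect a genuine obstacle here — the statement really is a direct corollary of Darboux's theorem. The only point that needs care is the bookkeeping in the middle step: one must verify that \emph{every} singularity of $f$ on $\partial\Disk$ is algebraic (so that Lemma~\ref{darboux} applies) and that the singularity of minimal weight is unique, which is why the parity and sign of $j$ at the point $z=-1$ have to be examined separately.
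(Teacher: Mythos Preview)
Your proposal is correct and follows essentially the same approach as the paper: both apply Darboux's Theorem (Lemma~\ref{darboux}) with $\xi=1$ as the singularity of minimal weight $\omega=\tfrac{k}{2}$ and $A(z)=(1+z)^{j/2}$, $B\equiv 0$. Your write-up is in fact more careful than the paper's one-line proof, since you explicitly verify that the potential second singularity at $z=-1$ is either absent or of strictly larger weight, which the paper leaves implicit.
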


\begin{proof}
$\xi=1$ is the singular point with minimal weight, $\omega=\frac{k}{2}$, $A(z)=(1+z)^{\frac{j}{2}}$, $B(z)=0$. Hence the result follows from Lemma \ref{darboux}.
\end{proof}

We now derive the estimates that we need for obtaining our main results.

\begin{lemma}\label{l2}  As $n\rightarrow \infty$,
$$[z^n]\Big[\sqrt{\frac{1+z}{1-z}}(1-z^2)^{\frac{l}{2}}\Big]=\left\{
                                                               \begin{array}{ll}
                                                                 \frac{2}{\sqrt{2\pi}}\frac{1}{\sqrt{n}}+O\Big(\frac{1}{n^{\frac{3}{2}}}\Big), & l=0 \\
                                                                 O(\frac{1}{n^{\frac{3}{2}}}), & l\geq 1.
                                                               \end{array}
                                                             \right.$$
\end{lemma}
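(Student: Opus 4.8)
The plan is to reduce the claim to an application of Darboux's Theorem (Lemma \ref{darboux}), after rewriting the function so that its boundary singularities are put in the canonical algebraic form (\ref{rep}). First I would write
\[
\sqrt{\frac{1+z}{1-z}}(1-z^2)^{\frac{l}{2}}=(1+z)^{\frac{l+1}{2}}(1-z)^{\frac{l-1}{2}},
\]
so that the function is holomorphic in $\Disk$ with exactly two boundary singularities, at $z=1$ and $z=-1$, both algebraic. The weight at $z=1$ is $\frac{l-1}{2}$ and the weight at $z=-1$ is $\frac{l+1}{2}$; since $l\geq 0$ is an integer these are half-integers (hence in $\Real\setminus\Natural$, as required) precisely when $l$ is even, and when $l$ is odd the function is a polynomial times $(1+z)^{\text{integer}+\frac12}\cdot(1-z)^{\text{integer}+\frac12}$, so I should be a little careful and split on the parity of $l$.

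For $l$ even, $z=1$ has the strictly smaller weight $\frac{l-1}{2}<\frac{l+1}{2}$, so Darboux's Theorem applies with the single minimal-weight singularity $\xi_1=1$, $\omega=\frac{l-1}{2}$, $A(z)=(1+z)^{\frac{l+1}{2}}$, $B(z)=0$, giving
\[
[z^n]\Big[(1+z)^{\frac{l+1}{2}}(1-z)^{\frac{l-1}{2}}\Big]=\frac{A(1)}{\Gamma\big(\!-\frac{l-1}{2}\big)}\frac{1}{n^{\frac{l-1}{2}+1}}+o\Big(\frac{1}{n^{\frac{l-1}{2}+1}}\Big)=\frac{2^{\frac{l+1}{2}}}{\Gamma\big(\frac{1-l}{2}\big)}\frac{1}{n^{\frac{l+1}{2}}}+o\Big(\frac{1}{n^{\frac{l+1}{2}}}\Big).
\]
For $l=0$ this is $\frac{2}{\Gamma(1/2)}\frac{1}{\sqrt n}+o(\frac1{\sqrt n})=\frac{2}{\sqrt\pi}\frac{1}{\sqrt n}+o(\frac1{\sqrt n})$; to upgrade the $o(\cdot)$ to the claimed $O(n^{-3/2})$ error I would instead apply the sharper single-singularity expansion of Lemma \ref{bas} (equivalently, write $(1+z)^{1/2}=\sqrt2\,(1-\tfrac{1-z}{2})^{1/2}$, expand the analytic factor about $z=1$ to second order, and apply Lemma \ref{bas} termwise), noting that $\frac{2}{\sqrt\pi}=\frac{2}{\sqrt{2\pi}}\sqrt2$ so the constants match the stated $\frac{2}{\sqrt{2\pi}}$ after the $(1+z)^{1/2}$ factor is evaluated; the half-integer reindexing is routine. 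For $l\geq 2$ even, the leading exponent $\frac{l+1}{2}\geq\frac32$, so the coefficient is $O(n^{-3/2})$, as claimed. For $l$ odd, $(1+z)^{\frac{l+1}{2}}$ is a polynomial, so the function equals a polynomial times $(1-z)^{\frac{l-1}{2}}$ with $\frac{l-1}{2}$ a nonnegative integer, hence the function is itself a polynomial and all sufficiently large coefficients vanish — again $O(n^{-3/2})$ trivially (alternatively this case never arises in the application, but it costs nothing to dispose of it).

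The only real subtlety, and thus the step needing the most care, is the $l=0$ case: Darboux as stated in Lemma \ref{darboux} only yields the leading term with an $o(\cdot)$ remainder, whereas here we need the error to be $O(n^{-3/2})$, i.e. two orders down. This is obtained not from Darboux but from the explicit second-order expansion in Lemma \ref{bas}: after localizing at $z=1$ via $(1+z)^{1/2}=\sqrt2\sum_{m\geq 0}\binom{1/2}{m}\big(\!-\tfrac{1-z}{2}\big)^m$, the $m=0$ term contributes $\sqrt2\,[z^n](1-z)^{-1/2}=\frac{\sqrt2}{\Gamma(1/2)}n^{-1/2}(1+O(n^{-1}))=\frac{2}{\sqrt{2\pi}}n^{-1/2}+O(n^{-3/2})$ by Lemma \ref{bas}, the $m=1$ term contributes $[z^n]$ of a constant times $(1-z)^{1/2}$, which is $O(n^{-3/2})$ by Lemma \ref{bas}, and all $m\geq 2$ terms contribute $O(n^{-5/2})$; absorbing the contribution of $z=-1$ (whose weight $1/2$ gives $O(n^{-3/2})$ after evaluating the analytic factor, which vanishes to high order there isn't needed — the weight alone suffices) completes the estimate.
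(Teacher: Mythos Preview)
Your proposal is correct and follows essentially the same route as the paper. The treatment of $l\geq 1$ is identical: odd $l$ gives a polynomial, even $l\geq 2$ falls under Darboux (the paper packages this as Lemma~\ref{gene}). For $l=0$ the paper proceeds by a single subtraction rather than a full series expansion: it writes
\[
\sqrt{\frac{1+z}{1-z}}=\sqrt{2}\,(1-z)^{-1/2}+g(z),\qquad g(z)=\frac{\sqrt{1+z}-\sqrt{2}}{\sqrt{1-z}},
\]
observes that $g$ has algebraic singularities of weight $\tfrac12$ at both $z=1$ and $z=-1$, and applies Darboux to get $[z^n]g(z)=O(n^{-3/2})$ in one stroke; Lemma~\ref{bas} then handles the $\sqrt{2}\,(1-z)^{-1/2}$ term. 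Your termwise expansion of $(1+z)^{1/2}$ about $z=1$ reaches the same conclusion but is slightly less direct: the ``all $m\geq 2$ terms contribute $O(n^{-5/2})$'' step is an infinite sum and, to be rigorous, you would repackage that tail as $(1-z)^{3/2}$ times something holomorphic near $z=1$ and invoke Darboux anyway (including the $z=-1$ singularity), which is exactly the paper's move one order down. Also, a small slip: for $l=0$ your displayed leading constant should read $\frac{2^{1/2}}{\Gamma(1/2)}=\frac{\sqrt{2}}{\sqrt{\pi}}=\frac{2}{\sqrt{2\pi}}$ directly; the extra reconciliation you describe is unnecessary.
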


\begin{proof} Set
$$f(z)=\sqrt{\frac{1+z}{1-z}}(1-z^2)^{\frac{l}{2}}=(1+z)^{\frac{l+1}{2}}(1-z)^{\frac{l-1}{2}}.$$
If $l\geq 1$ is odd then this function is a polynomial, so that its coefficients vanish for large $n$, so the $O\Big(\frac{1}{n^{\frac{3}{2}}}\Big)$ estimate is
trivially true.

We now consider the case that $l\geq 2$ is even. Then Lemma \ref{gene}, with $j=l+1$, $k=l-1$ gives
$$[z^n]f(z)= \frac{2^{\frac{l+1}{2}}}{\Gamma(-\frac{l-1}{2})}\frac{1}{n^{\frac{l+1}{2}}}+o\Big(\frac{1}{n^{\frac{l+1}{2}}} \Big)=O\Big(\frac{1}{n^{\frac{3}{2}}}\Big).$$
There remains the case $l=0$.
We write
\begin{equation}\label{dd}f(z)=\sqrt{\frac{1+z}{1-z}}=g(z)+\sqrt{2}\frac{1}{\sqrt{1-z}},\end{equation}
where
$$g(z)=\frac{k(z)}{\sqrt{1-z}},\;\;k(z)=\sqrt{1+z}-\sqrt{2}$$
so that $k(1)=0$, implying that
$$g(z)=A(z)(1-z)^{\frac{1}{2}}$$
where
$$A(z)=\frac{k(z)}{1-z}$$
is holomorphic in a neighborhood of $z=1$. Thus $g$ has an algebraic singularity with weight $\frac{1}{2}$ at $z=1$. Clearly it also has
an algebraic singularity with weight $\frac{1}{2}$ at $z=-1$. From Lemma \ref{darboux} we therefore have
\begin{equation}\label{dd1}[z^n]g(z)=O\Big(\frac{1}{n^{\frac{3}{2}}}\Big).\end{equation}
By Lemma \ref{bas} we have
\begin{equation}\label{dd3}[z^n]\Big(\frac{1}{\sqrt{1-z}}\Big)=\frac{1}{\sqrt{\pi}}\frac{1}{n^{\frac{1}{2}}}\Big( 1+O\Big(\frac{1}{n}\Big)\Big).\end{equation}
Combining (\ref{dd}),(\ref{dd1}) and (\ref{dd3}) we obtain
$$[z^n]f(z)=\frac{\sqrt{2}}{\sqrt{\pi}}\frac{1}{\sqrt{n}}+O\Big(\frac{1}{n^{\frac{3}{2}}}\Big).$$
\end{proof}

\begin{lemma}\label{l5}  As $n\rightarrow \infty$,
$$[z^n]\Big[\frac{(1-z^2)^{\frac{l}{2}}}{1-z} \Big]=\left\{
                                                      \begin{array}{ll}
                                                        1, & l=0 \\
                                                        \frac{2}{\sqrt{2\pi}}\frac{1}{\sqrt{n}}+O\Big(\frac{1}{n^{\frac{3}{2}}}\Big) , & l=1 \\
                                                        O\Big(\frac{1}{n^{\frac{3}{2}}}\Big), & l\geq 2.
                                                      \end{array}
                                                    \right.
$$
\end{lemma}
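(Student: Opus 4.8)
The strategy mirrors the proof of Lemma \ref{l2}: split the function $\frac{(1-z^2)^{l/2}}{1-z} = (1+z)^{l/2}(1-z)^{l/2-1}$ according to the parity and size of $l$, and in each case either observe the function is a polynomial, apply Lemma \ref{gene}, or peel off the dominant singular part and invoke Lemma \ref{bas} together with Lemma \ref{darboux}.

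\emph{The case $l=0$.} Here the function is simply $\frac{1}{1-z}$, whose $n$-th coefficient is identically $1$. Nothing further is needed.

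\emph{The case $l\geq 2$.} If $l$ is even, then $(1+z)^{l/2}(1-z)^{l/2-1}$ is a polynomial (both exponents are non-negative integers), so its coefficients vanish for large $n$ and the $O(n^{-3/2})$ bound is trivial. If $l\geq 3$ is odd, write $f(z)=(1+z)^{l/2}(1-z)^{(l-2)/2}$ with $j=l$, $k=l-2\geq 1$ odd, and apply Lemma \ref{gene}: the minimal-weight singularity is $\xi=1$ with weight $\omega=(l-2)/2\geq 1/2$, giving $[z^n]f(z)=O(n^{-(l-2)/2-1})=O(n^{-3/2})$ since $l\geq 3$. (When $l=2$ this is subsumed in the even case; the only slightly delicate point is that for $l\geq 3$ we genuinely use $\tfrac{l-2}{2}+1\geq \tfrac{3}{2}$, which holds with equality at $l=3$.)

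\emph{The case $l=1$.} Now $f(z)=\frac{1-z^2}{1-z}\cdot\frac{1}{\sqrt{1-z^2}}\cdot$ — more directly, $\frac{(1-z^2)^{1/2}}{1-z}=(1+z)^{1/2}(1-z)^{-1/2}=\sqrt{\frac{1+z}{1-z}}$, which is exactly the $l=0$ instance of Lemma \ref{l2}. Hence $[z^n]f(z)=\frac{2}{\sqrt{2\pi}}\frac{1}{\sqrt{n}}+O(n^{-3/2})$ by direct citation. Alternatively, if one prefers a self-contained argument: split $\sqrt{\frac{1+z}{1-z}}=\sqrt{2}\,(1-z)^{-1/2}+g(z)$ with $g(z)=\frac{\sqrt{1+z}-\sqrt 2}{\sqrt{1-z}}$ having algebraic singularities of weight $\tfrac12$ at both $z=\pm 1$ (the numerator vanishing at $z=1$), so $[z^n]g(z)=O(n^{-3/2})$ by Lemma \ref{darboux}, while $[z^n]\sqrt 2(1-z)^{-1/2}=\frac{\sqrt2}{\sqrt\pi}n^{-1/2}(1+O(1/n))$ by Lemma \ref{bas}; note $\frac{\sqrt2}{\sqrt\pi}=\frac{2}{\sqrt{2\pi}}$.

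\textbf{Main obstacle.} There is no real obstacle here — the lemma is a routine corollary of the machinery already assembled. The only point requiring a moment's care is bookkeeping: checking in the $l\geq 3$ odd case that the exponent on $n$ is indeed at most $-3/2$ (it equals $-3/2$ precisely when $l=3$, and is smaller for larger $l$), and recognizing the $l=1$ case as the previously computed $l=0$ case of Lemma \ref{l2} so that the leading constant $\frac{2}{\sqrt{2\pi}}$ comes out correctly.
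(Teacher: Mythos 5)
Your proof is correct and follows essentially the same route as the paper: the factorization $(1+z)^{l/2}(1-z)^{(l-2)/2}$, the trivial polynomial case for even $l\geq 2$, Lemma \ref{gene} with $j=l$, $k=l-2$ for odd $l\geq 3$, and reduction of $l=1$ to the $l=0$ case of Lemma \ref{l2}. The only difference is cosmetic: your optional self-contained treatment of $l=1$ simply reproduces the splitting already carried out inside the proof of Lemma \ref{l2}.
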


\begin{proof}
Set
$$f(z)=\frac{(1-z^2)^{\frac{l}{2}}}{1-z}=(1+z)^{\frac{l}{2}}(1-z)^{\frac{l-2}{2}}.$$
When $l=0$ we have
$$[z^n]f(z)=[z^n]\Big(\frac{1}{1-z}\Big)=1.$$
When $l\geq 2$ is even then $f(z)$ is a polynomial of degree $j$, so the coefficients vanish for large $n$, and the $O\Big(\frac{1}{n^{\frac{3}{2}}}\Big)$ estimate is
trivially true.

Assuming $l$ is odd, Lemma \ref{gene}, with $j=l$, $k=l-2$ gives
$$[z^n]f(z)= \frac{2^{\frac{l}{2}}}{\Gamma(1-\frac{l}{2})}\frac{1}{n^{\frac{l}{2}}}+o\Big(\frac{1}{n^{\frac{l}{2}}} \Big).$$
In the case $l\geq 3$, this gives the required estimate.

If $l=1$ then $f(z)=\sqrt{\frac{1+z}{1-z}}$, and the required estimate follows from the case $l=0$ of Lemma \ref{l2}.
\end{proof}

\begin{lemma}\label{l3}  As $n\rightarrow \infty$,
$$[z^n]\Big[\frac{\sqrt{1+z}}{(1-z)^{\frac{3}{2}}}(1-z^2)^{\frac{l}{2}}\Big]=\left\{
                                                                               \begin{array}{ll}
                                                                                 \frac{4}{\sqrt{2\pi}} \sqrt{n}+\frac{1}{\sqrt{2\pi}}\frac{1}{\sqrt{n}}+O\Big(\frac{1}{n^{\frac{3}{2}}}\Big), & l=0 \\
                                                                                 2+O\Big(\frac{1}{n^{\frac{3}{2}}}\Big), & l=1 \\
                                                                     \frac{4}{\sqrt{2\pi}}\frac{1}{\sqrt{n}}+O\Big(\frac{1}{n^{\frac{3}{2}}}\Big), & l=2 \\
                                                                                 O\Big(\frac{1}{n^{\frac{3}{2}}}\Big), & l\geq 3.
                                                                               \end{array}
                                                                             \right.
$$
\end{lemma}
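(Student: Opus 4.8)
The plan is to set $f(z)\doteq(1+z)^{(l+1)/2}(1-z)^{(l-3)/2}$, which is exactly the displayed function, and to treat the four ranges of $l$ separately, in order of increasing difficulty. For odd $l\geq 3$ both exponents are non‑negative integers, so $f$ is a polynomial and $[z^n]f(z)=0$ for large $n$. For even $l\geq 4$, Lemma \ref{gene} with $j=l+1$, $k=l-3$ applies (here $j>k$ and $k$ is odd) and gives $[z^n]f(z)=O(n^{-(l-1)/2})=O(n^{-3/2})$. For $l=1$ the function is rational, $f(z)=\frac{1+z}{1-z}=\frac{2}{1-z}-1$, so its coefficients equal $2$ for all $n\geq 1$.

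For $l=2$ I would use the identity $f(z)=(1+z)\sqrt{\frac{1+z}{1-z}}=2\sqrt{\frac{1+z}{1-z}}-\sqrt{1-z^2}$, where the second equality follows from $(1-z)\sqrt{\frac{1+z}{1-z}}=\sqrt{1-z^2}$. The first term is covered by the $l=0$ case of Lemma \ref{l2}, and $\sqrt{1-z^2}=(1+z)^{1/2}(1-z)^{1/2}$ has algebraic singularities of weight $\frac12$ at $z=\pm1$ and nowhere else, so Lemma \ref{darboux} gives $[z^n]\sqrt{1-z^2}=O(n^{-3/2})$. Adding these yields the claimed $\frac{4}{\sqrt{2\pi}}\frac{1}{\sqrt n}+O(n^{-3/2})$.

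The substantive case is $l=0$, where $f(z)=\frac{\sqrt{1+z}}{(1-z)^{3/2}}$ and two terms of the expansion are required with error $O(n^{-3/2})$, whereas Lemma \ref{darboux} as stated provides only an $o$‑error on the leading singular term. The idea is to subtract enough singular terms at $z=1$ that the remainder has a singularity of weight $\geq\frac12$ there, at which point the bound from Lemma \ref{darboux} is as good as $O(n^{-3/2})$. Using $\sqrt{1+z}=\sqrt2\,\bigl(1-\tfrac{1-z}{2}\bigr)^{1/2}=\sqrt2-\tfrac{\sqrt2}{4}(1-z)+O((1-z)^2)$ near $z=1$, write $f(z)=\dfrac{\sqrt2}{(1-z)^{3/2}}-\dfrac{\sqrt2}{4}\cdot\dfrac{1}{(1-z)^{1/2}}+g(z)$ with $g(z)=\dfrac{\sqrt{1+z}-\sqrt2+\tfrac{\sqrt2}{4}(1-z)}{(1-z)^{3/2}}$. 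The numerator of $g$ vanishes to second order at $z=1$, so $g(z)=B(z)(1-z)^{1/2}$ near $z=1$ with $B$ holomorphic; at $z=-1$ the factor $\sqrt{1+z}$ contributes a weight‑$\frac12$ singularity and the subtracted powers of $1-z$ are holomorphic there. Hence $g$ has singularities only at $z=\pm1$, both of weight $\frac12$, and $[z^n]g(z)=O(n^{-3/2})$ by Lemma \ref{darboux}. The two explicit terms come from Lemma \ref{bas}: with $\omega=-\frac32$ (recall $\Gamma(\tfrac32)=\tfrac{\sqrt\pi}{2}$) one gets $\sqrt2\,[z^n](1-z)^{-3/2}=\frac{4}{\sqrt{2\pi}}\sqrt n+\frac{3}{2\sqrt{2\pi}}\frac1{\sqrt n}+O(n^{-3/2})$, and with $\omega=-\frac12$ (recall $\Gamma(\tfrac12)=\sqrt\pi$) one gets $-\tfrac{\sqrt2}{4}[z^n](1-z)^{-1/2}=-\frac{1}{2\sqrt{2\pi}}\frac1{\sqrt n}+O(n^{-3/2})$; summing produces $\frac{4}{\sqrt{2\pi}}\sqrt n+\frac{1}{\sqrt{2\pi}}\frac1{\sqrt n}+O(n^{-3/2})$.

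The main obstacle is precisely this bookkeeping in the $l=0$ case: one must peel off exactly the right number (two) of Taylor terms of $\sqrt{1+z}$ at $z=1$ so that the crude Darboux estimate on the remainder becomes sharp enough, while checking that the subtracted rational powers of $1-z$ do not worsen the behavior at $z=-1$ — which they do not, since there the singularity weight is already pinned at $\frac12$ by $\sqrt{1+z}$. Everything else reduces to routine applications of Lemmas \ref{bas}, \ref{gene} and \ref{darboux}.
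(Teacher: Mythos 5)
Your proposal is correct and follows essentially the same route as the paper: the same case split, the same appeals to Lemmas \ref{gene}, \ref{darboux} and \ref{bas}, and for the crucial $l=0$ case exactly the paper's device of subtracting $\frac{\sqrt2}{(1-z)^{3/2}}-\frac{\sqrt2}{4}(1-z)^{-1/2}$ so the remainder has weight $\frac12$ singularities at $z=\pm1$, yielding the same constants $\frac{4}{\sqrt{2\pi}}\sqrt{n}+\frac{1}{\sqrt{2\pi}}\frac{1}{\sqrt{n}}$. The only (harmless) deviation is at $l=2$, where you reduce to the $l=0$ case of Lemma \ref{l2} via $(1+z)\sqrt{\tfrac{1+z}{1-z}}=2\sqrt{\tfrac{1+z}{1-z}}-\sqrt{1-z^2}$ instead of the paper's direct subtraction of $2^{3/2}(1-z)^{-1/2}$; both arguments give the stated estimate.
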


\begin{proof} Set
$$f(z)=\frac{\sqrt{1+z}}{(1-z)^{\frac{3}{2}}}(1-z^2)^{\frac{l}{2}}=(1+z)^{\frac{l+1}{2}}(1-z)^{\frac{l-3}{2}}.$$
If $l\geq 3$ is odd then $f(z)$ is a polynomial, so the coefficients vanish for large $n$, and the estimate holds trivially.

If $l$ is even then from Lemma \ref{gene} with $j=l+1$, $k=l-3$ it follows that
$$[z^n]f(z)= \frac{2^{\frac{l+1}{2}}}{\Gamma(-\frac{l-3}{2})}\frac{1}{n^{\frac{l-1}{2}}}+o\Big(\frac{1}{n^{\frac{l-1}{2}}} \Big).$$
Thus if $l\geq 4$ then
$$[z^n]f(z)=O\Big(\frac{1}{n^{\frac{3}{2}}}\Big).$$
If $l=2$ we decompose
$$f(z)=g(z)+2^{\frac{3}{2}}\frac{1}{\sqrt{1-z}}$$
where
$$g(z)=\frac{k(z)}{\sqrt{1-z}},\;\;\;k(z)=(1+z)^{\frac{3}{2}}-2^{\frac{3}{2}},$$
and since $k(1)=0$ we have
$$g(z)=A(z)\sqrt{1-z},\;\;A(z)=\frac{k(z)}{1-z},$$
where $A(z)$ is holomorphic in a neighborhood of $z=1$, so that $z=1$ is an algebraic singularity of $g$ with weight $\frac{1}{2}$. $z=-1$
is also an algebraic singularity, with weight $\frac{3}{2}$. Therefore Lemma \ref{darboux} implies that
$$[z^n]g(z)=O\Big(\frac{1}{n^{\frac{3}{2}}}\Big),$$
and using Lemma \ref{bas} we have
$$[z^n]f(z)=[z^n]g(z)+2^{\frac{3}{2}}[z^n]\Big[\frac{1}{\sqrt{1-z}} \Big]=\frac{2^{\frac{3}{2}}}{\sqrt{\pi}}\frac{1}{\sqrt{n}}+O\Big(\frac{1}{n^{\frac{3}{2}}}\Big).$$
If $l=1$ then
$$f(z)=\frac{1+z}{1-z}$$
so
$$[z^n]f(z)=2,\;\;n\geq 1.$$
If $l=0$ we decompose
\begin{equation}\label{deci}f(z)=\frac{\sqrt{1+z}}{(1-z)^{\frac{3}{2}}}=g(z)
+\frac{\sqrt{2}}{(1-z)^{\frac{3}{2}}}-\frac{\sqrt{2}}{4}
\frac{1}{\sqrt{1-z}},\end{equation}
where
$$g(z)=\frac{k(z)}{(1-z)^{\frac{3}{2}}},\;\;k(z)=\sqrt{1+z}-\sqrt{2}+\frac{\sqrt{2}}{4}(1-z)$$
is chosen so that $k(1)=k'(1)=0$, hence setting
$$g(z)=A(z)\sqrt{1-z},\;\;A(z)=\frac{k(z)}{(1-z)^2},$$
we have that $A(z)$ is holomorphic in a neighborhood of $z=1$, so that $z=1$ is an algebraic singularity of $g$ with weight $\frac{1}{2}$. $z=-1$ is also an
algebraic singularity with weight $\frac{1}{2}$. Therefore lemma \ref{darboux} implies that
$$[z^n]g(z)=O\Big(\frac{1}{n^{\frac{3}{2}}}\Big).$$
Therefore, using (\ref{deci}) and Lemma \ref{bas}
$$[z^n]f(z)=[z^n]g(z)+\sqrt{2}[z^n]\Big[\frac{1}{(1-z)^{\frac{3}{2}}}\Big]-\frac{\sqrt{2}}{4}[z^n]\Big[
\frac{1}{\sqrt{1-z}}\Big]$$
$$=O\Big(\frac{1}{n^{\frac{3}{2}}} \Big)+\sqrt{2}\frac{1}{\Gamma(\frac{3}{2})}\Big( 1+\frac{3}{8n}+O\Big(\frac{1}{n^2}\Big)\Big)n^{\frac{1}{2}}-\frac{\sqrt{2}}{4}\frac{1}{\Gamma(\frac{1}{2})}\Big( 1+O\Big(\frac{1}{n}\Big)\Big)\frac{1}{n^{\frac{1}{2}}}$$
$$=\frac{4}{\sqrt{2\pi}} \sqrt{n}+\frac{1}{\sqrt{2\pi}}\frac{1}{\sqrt{n}}+O\Big(\frac{1}{n^{\frac{3}{2}}} \Big).$$
\end{proof}

\begin{lemma}\label{lll}
For any $l\geq 0$, as $n\rightarrow \infty$,
$$[z^n][(1-z^2)^{\frac{l}{2}}]=O\Big(\frac{1}{n^{\frac{3}{2}}}\Big).$$
\end{lemma}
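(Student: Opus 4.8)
The plan is to dispose of the two parities of $l$ separately, the even case being immediate and the odd case reducing directly to Darboux's theorem (Lemma \ref{darboux}). If $l$ is even, then $(1-z^2)^{l/2}$ is an ordinary polynomial of degree $l$, so $[z^n][(1-z^2)^{l/2}]=0$ for every $n>l$ and the estimate is trivially true.

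So I would assume $l$ is odd, and set $f(z)=(1-z^2)^{l/2}=(1-z)^{l/2}(1+z)^{l/2}$. This function is holomorphic in $\Disk$, and its only singularities on $\partial\Disk$ are at $z=1$ and $z=-1$. Both are algebraic in the sense of (\ref{rep}): near $z=1$ one takes $\xi=1$, weight $\omega=\frac{l}{2}$ (which lies in $\Real\setminus\Natural$ precisely because $l$ is odd), $A(z)=(1+z)^{l/2}$ — holomorphic there since $1+z$ stays near $2$, away from the branch cut — and $B(z)=0$; near $z=-1$ one takes $\xi=-1$, the same weight $\omega=\frac{l}{2}$, $A(z)=(1-z)^{l/2}$, $B(z)=0$. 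Thus both boundary singularities carry the common minimal weight $\omega=\frac{l}{2}$, and Lemma \ref{darboux} yields $[z^n]f(z)=O(n^{-\omega-1})=O(n^{-l/2-1})$. Since $l\geq 1$, we have $\frac{l}{2}+1\geq\frac{3}{2}$, whence $[z^n]f(z)=O(n^{-3/2})$, as claimed.

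This is essentially a corollary of Lemma \ref{darboux}, in the same spirit as Lemma \ref{gene}; the only reason it is not literally covered by Lemma \ref{gene} is that there the hypothesis $j>k$ forces the singularity at $z=1$ to have strictly smaller weight than the one at $z=-1$, whereas here the two weights coincide and both contribute to the leading Darboux coefficient (this affects the constant but not the order). Consequently there is no genuine obstacle; the only point that deserves a moment's attention is the separation of the even case, which is required because for even $l$ the exponent $\frac{l}{2}$ is an integer, so $z=\pm1$ are not singularities of $f$ at all and Darboux's theorem does not apply.
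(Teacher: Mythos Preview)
Your proof is correct and follows exactly the approach of the paper, which disposes of even $l$ by the polynomial observation and invokes Lemma \ref{darboux} for odd $l$; you have simply written out the details that the paper leaves implicit. The extra remark about why Lemma \ref{gene} does not literally apply (equal weights at $\pm 1$) is accurate but not needed for the argument.
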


\begin{proof} For $l$ even the function is a polynomial, so the result holds trivially. For $l$ odd the result follows from Lemma \ref{darboux}.
\end{proof}

\begin{lemma}\label{l1} For $\alpha\in \Complex$, let $K_{\alpha}(z)$ be defined by (\ref{ka}).

(i) For any $\alpha\neq \pm 1$ with $\phi(\alpha)=0$ we have, for any integer $l\geq 0$, as $n\rightarrow\infty$,
$$[z^n]\Big[K_{\alpha}(z)(1-z^2)^{\frac{l}{2}}\Big]=O\Big(\frac{1}{n^{\frac{3}{2}}} \Big).$$

(ii) For $\alpha=-1$ we have, as $n\rightarrow\infty$,
$$[z^n]\Big[K_{-1}(z)(1-z^2)^{\frac{l}{2}}\Big]=\left\{
                                                               \begin{array}{ll}
                                                                 (-1)^n\frac{2}{\sqrt{2\pi}}\frac{1}{\sqrt{n}}+O\Big(\frac{1}{n^{\frac{3}{2}}} \Big), & l=0 \\
                                                                 O\Big(\frac{1}{n^{\frac{3}{2}}} \Big), & l\geq 1.
                                                               \end{array}
                                                             \right.$$
\end{lemma}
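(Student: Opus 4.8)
The plan is to handle part (ii) first, since it reduces at once to lemmas already established. Substituting $\alpha=-1$ into (\ref{ka}) and simplifying gives
$$K_{-1}(z)=\frac{4+4z+2\sqrt{1-z^2}}{2z+2}=2+\frac{\sqrt{1-z^2}}{1+z}=2+\sqrt{\frac{1-z}{1+z}},$$
the last step using $\sqrt{1-z^2}=\sqrt{1-z}\,\sqrt{1+z}$ for $z\in\Disk$ (both factors lie in the right half-plane, so the principal branches multiply correctly). Hence $K_{-1}(z)(1-z^2)^{l/2}=2(1-z^2)^{l/2}+\sqrt{\frac{1-z}{1+z}}(1-z^2)^{l/2}$, and the first summand contributes $O(n^{-3/2})$ by Lemma \ref{lll}. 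For the second, I note that $\sqrt{\frac{1-z}{1+z}}(1-z^2)^{l/2}$ is the image of the function $\sqrt{\frac{1+z}{1-z}}(1-z^2)^{l/2}$ treated in Lemma \ref{l2} under the substitution $z\mapsto -z$; consequently its $n$-th Taylor coefficient is $(-1)^n$ times the one given by Lemma \ref{l2}, and reading that lemma off yields exactly the asserted formulas.

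For part (i) I would prove that $f_\alpha(z):=K_\alpha(z)(1-z^2)^{l/2}$ is holomorphic in $\Disk$ and that its only singularities on $\partial\Disk$ are $z=1$ and $z=-1$, each algebraic of weight $\ge\frac12$; Darboux's theorem (Lemma \ref{darboux}) then gives $[z^n]f_\alpha(z)=O(n^{-3/2})$ at once. The local analysis at $z=\pm1$ is routine: near $z=1$, writing $\sqrt{1-z^2}=\sqrt{1+z}\,\sqrt{1-z}$ exhibits $K_\alpha(z)=B(z)+A(z)\sqrt{1-z}$ with $A,B$ holomorphic there (the denominator $(\alpha^2+1)z-2\alpha$ is nonzero at $z=1$ because $\alpha\ne1$), and multiplying by $(1-z^2)^{l/2}=(1+z)^{l/2}(1-z)^{l/2}$ only increases the exponent of $(1-z)$; the point $z=-1$ is symmetric, using $\alpha\ne-1$. (If $\alpha^2+1=0$, i.e. $\alpha=\pm i$, then $K_\alpha$ is constant and the claim is immediate from Lemma \ref{lll}.)

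The real content of part (i) is the holomorphy of $K_\alpha$ in $\Disk$, the only potential obstruction being the zero $z_\alpha=\frac{2\alpha}{\alpha^2+1}$ of the denominator, which I would rule out by cases. If $|\alpha|<1$, then by (\ref{inv}) $z_\alpha=\rho^{-1}(\alpha)$ lies in the domain of $\rho$ and $\rho(z_\alpha)=\alpha$, so $\phi(\rho(z_\alpha))=\phi(\alpha)=0$ and Lemma \ref{eid} forces $|z_\alpha|>1$; thus $z_\alpha\notin\overline{\Disk}$. If $|\alpha|>1$, then $1/\alpha\in\Disk$ and (since $\rho^{-1}(\beta)=\rho^{-1}(1/\beta)$ by (\ref{inv})) $z_\alpha=\rho^{-1}(1/\alpha)$ is again in the domain of $\rho$, with $\rho(z_\alpha)=1/\alpha$; hence by (\ref{defrho}), $\sqrt{1-z_\alpha^2}=1-z_\alpha\rho(z_\alpha)=1-\frac{z_\alpha}{\alpha}=\frac{\alpha^2-1}{\alpha^2+1}$, and substituting this value of the square root into the numerator of $K_\alpha$ one finds it equals $(\alpha-1)^2-(\alpha-1)^2=0$, so $z_\alpha$ is a removable singularity. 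If $|\alpha|=1$ with $\alpha\ne\pm1,\pm i$, writing $\alpha=e^{i\theta}$ gives $z_\alpha=\sec\theta$, which is real with $|\sec\theta|>1$, so once more $z_\alpha\notin\overline{\Disk}$. In every case $K_\alpha$ is holomorphic throughout $\Disk$ and continuous on $\partial\Disk\setminus\{\pm1\}$, and combined with the preceding paragraph this completes part (i).

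The step I expect to require the most care is the case $|\alpha|>1$: one has to know that $z_\alpha$ lies off the branch cut $(-\infty,-1]\cup[1,\infty)$ (so that $\sqrt{1-z^2}$ is holomorphic near it and the removability argument makes sense) and to pin down the correct sign of $\sqrt{1-z_\alpha^2}$. Both follow cleanly from the fact that, by construction, $z_\alpha=\rho^{-1}(1/\alpha)$ sits in the domain of the conformal map $\rho$ of (\ref{conf})--(\ref{inv}) with $\rho(z_\alpha)=1/\alpha$, so that the value of $\sqrt{1-z_\alpha^2}$ is determined by (\ref{defrho}). Everything else is elementary verification and appeals to Lemmas \ref{darboux}, \ref{lll}, \ref{l2}, and \ref{eid}.
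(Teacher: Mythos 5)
Your proposal is correct and follows essentially the same route as the paper: the same three-case analysis (via Lemma \ref{eid}, the location of $\frac{2\alpha}{\alpha^2+1}$ for $|\alpha|=1$, and removability of the singularity for $|\alpha|>1$) to show $K_\alpha$ is holomorphic in $\Disk$, then Darboux's theorem at the algebraic singularities $z=\pm 1$, and for $\alpha=-1$ the same reduction to Lemma \ref{l2} via $z\mapsto -z$. Your only deviation is cosmetic: you fix the branch of $\sqrt{1-z_\alpha^2}$ in the $|\alpha|>1$ case through the identity $\sqrt{1-z^2}=1-z\rho(z)$ and the conformal map (\ref{conf})--(\ref{inv}) rather than the paper's real-part argument, and you explicitly dispose of $\alpha=\pm i$, which the paper passes over.
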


\begin{proof} Set:
$$f(z)=K_{\alpha}(z)(1-z^2)^{\frac{l}{2}}=\frac{(\alpha-1)^2+(\alpha^2+1)[(1-\alpha)z+\sqrt{1-z^2}]}{(\alpha^2+1)z-2\alpha}(1-z^2)^{\frac{l}{2}}.$$
We first claim that $K_\alpha(z)$, and hence also $f(z)$, is holomorphic in $\Disk$. Since the numerator and denominator are
obviously holomorphic on this set, it is only necessary to show
that the apparent singular point when the denominator vanishes, that is
\begin{equation}\label{z0}z_0=\frac{2\alpha}{\alpha^2+1},\end{equation}
either satisfies $|z_0|>1$, or is
is removable. There are three cases to consider:

(i) If $|\alpha|<1$, then from (\ref{inv}) and (\ref{z0}) we have that $\rho(z_0)=\alpha$, and thus by the assumption that $\phi(\alpha)=0$ and Lemma
\ref{eid} we have that $|z_0|>1$, so that the singularity is outside the unit disk.

(ii) If $|\alpha|=1$ then $z_0\in (-\infty,-1] \cup [1,\infty)$, so the singularity is outside the unit disk.

(iii) If $|\alpha|>1$ then we will show that the numerator of $K_\alpha(z)$ vanishes at $z=z_0$, so that the apparent singularity at $z=z_0$ is removable. Indeed,
$|\alpha|>1$ implies that
$$\Re\Big(\frac{\alpha^2-1}{\alpha^2+1}\Big)>0,$$
so that
$$\sqrt{\Big(\frac{\alpha^2-1}{\alpha^2+1}\Big)^2}=\frac{\alpha^2-1}{\alpha^2+1}$$
(note that the equality $\sqrt{x^2}=x$ holds only when $\Re(x)>0$).
Therefore, substituting $z=z_0$ into the numerator of $K_{\alpha}(z)$ we obtain
$$(\alpha-1)^2+(\alpha^2+1)\Big[(1-\alpha) z_0+\sqrt{1-z_0^2}\Big]$$
$$=(\alpha-1)^2+(\alpha^2+1)\Big[(1-\alpha) \frac{2\alpha}{\alpha^2+1}+\sqrt{\Big(\frac{\alpha^2-1}{\alpha^2+1}\Big)^2}\Big]=0.$$

We now assume that $\alpha\neq \pm 1$.

The singularities of $f$ on the boundary of the unit disk are $\xi_\pm=\pm 1$, both algebraic singularities.

If $l$ is odd then the weight of both singularities is $\omega=\frac{l}{2}$, since $f$ can be represented as
$$f(z)=A_+(z)(1-z)^{\frac{l}{2}}+B(z),\;\;f(z)=A_-(z)(1+z)^{\frac{l}{2}}+B(z),$$
where
$$A_{\pm}(z)=\frac{[(\alpha-1)^2+(\alpha^2+1)(1-\alpha) z](1\pm z)^{\frac{l}{2}}}{(\alpha^2+1)z-2\alpha},$$
$$B(z)=\frac{\alpha^2+1}{(\alpha^2+1)z-2\alpha}(1-z^2)^{\frac{l+1}{2}}$$
are holomorphic in neighborhoods of $\pm 1$, respectively. Note that here we use the assumption $\alpha\neq \pm 1$, and that to conclude that $B$ is holomorphic in
 neighborhoods of $z=\pm 1$ we use the assumption that $l$ is odd.

%Since
%$$A_{+}(1)=\frac{[\alpha- \alpha^2-2 ]2^{\frac{l}{2}}}{\alpha-1},\;A_{-}(-1)=-\frac{\alpha(\alpha-1)2^{\frac{l}{2}}}{\alpha+1},$$
Lemma \ref{darboux} thus gives
$$[z^n]f(z)=C\frac{1}{n^{\frac{l}{2}+1}} +o\Big( \frac{1}{n^{\frac{l}{2}+1}}\Big)=O\Big(\frac{1}{n^{\frac{3}{2}}} \Big).$$

If $l$ is even then the singularities $\xi_\pm=\pm 1$ have weights $\omega=\frac{l+1}{2}$, since $f$ can be represented as
$$f(z)=A_+(z)(1-z)^{\frac{l+1}{2}}+B(z),\;\;f(z)=A_-(z)(1+z)^{\frac{l+1}{2}}+B(z),$$
$$A_\pm(z)=\frac{(\alpha^2+1)(1\pm z)^{\frac{l+1}{2}}}{(\alpha^2+1)z-2\alpha},$$
$$B(z)=\frac{(\alpha-1)^2+(\alpha^2+1)(1-\alpha) z}{(\alpha^2+1)z-2\alpha}(1-z^2)^{\frac{l}{2}}.$$
%$$A_+(1)=\frac{(\alpha^2+1)2^{\frac{l+1}{2}}}{(\alpha-1)^2},\;\;A_-(-1)=-\frac{(\alpha^2+1)2^{\frac{l+1}{2}}}{(\alpha+1)^2}$$
Therefore lemma \ref{darboux} gives
$$[z^n]f(z)=C\frac{1}{n^{\frac{l}{2}+\frac{3}{2}}} +o\Big( \frac{1}{n^{\frac{l}{2}+\frac{3}{2}}}\Big)=O\Big(\frac{1}{n^{\frac{3}{2}}} \Big).$$

There remains the case $\alpha=-1$. In this case we have
$$f(z)=\Big[2+ \sqrt{\frac{1-z}{1+z}}\Big](1-z^2)^{\frac{l}{2}}.$$
Using Lemma \ref{l2} we have
$$[z^n]\Big[\sqrt{\frac{1-z}{1+z}}(1-z^2)^{\frac{l}{2}} \Big]=[z^n]\Big[(1-(-z)^2)^{\frac{l}{2}}\sqrt{\frac{1+(-z)}{1-(-z)}} \Big]$$
$$=(-1)^n[z^n]\Big[\sqrt{\frac{1-z}{1+z}} (1-z^2)^{\frac{l}{2}}\Big]$$
$$=\left\{
                                                               \begin{array}{ll}
                                                                 (-1)^n\frac{2}{\sqrt{2\pi}}\frac{1}{\sqrt{n}}+O\Big(\frac{1}{n^{\frac{3}{2}}}\Big), & l=0 \\
                                                                 O(\frac{1}{n^{\frac{3}{2}}}), & l\geq 1.
                                                               \end{array}
                                                             \right.$$
From Lemma \ref{bas} we have, for $l\geq 1$ odd,
$$[z^n](1-z^2)^{\frac{l}{2}}=\frac{1}{\Gamma(-\frac{l}{2})}\Big[ 1+O\Big(\frac{1}{n}\Big)\Big]\frac{1}{n^{\frac{l}{2}+1}}=O\Big(\frac{1}{n^{\frac{3}{2}}}\Big),$$
and the same estimate trivially holds for even $l$ because the function is then a polynomial.

Therefore if $l\geq 1$ we have $[z^n]f(z)=O(\frac{1}{n^{\frac{3}{2}}})$, while for $l=0$ we get
$$[z^n]f(z)=(-1)^n\frac{2}{\sqrt{2\pi}}\frac{1}{\sqrt{n}}+O\Big(\frac{1}{n^{\frac{3}{2}}}\Big).$$
\end{proof}

\section{Proof of the main theorems}
\label{mainproof}

We now have the tools needed to finish the proofs of Theorems \ref{main} and \ref{main2}.

We consider first the case $j=0$ of Theorems \ref{main},\ref{main2}. From (\ref{ex}),(\ref{dec})  we have, for $n\geq 1$:
\begin{eqnarray}\label{calc}E(X_n\;|\;X_0=0)&=&[z^n][H_0(z)]=C_1 [z^n]\Big[\sqrt{\frac{1+z}{1-z}}\Big]+C_2 +C_3 [z^n]\Big[\frac{\sqrt{1+z}}{(1-z)^{\frac{3}{2}}}\Big]\nonumber\\
&+&2E(Y)\sum_{\psi(\alpha)=0}\frac{1}{\psi'(\alpha)}\frac{\alpha}{(\alpha-1)^3}[z^n][K_{\alpha}(z)]\end{eqnarray}
(for $n=0$ one needs to add the term $C_4$).

For the first three terms in (\ref{calc}) we have, from (\ref{dec}) and lemmas \ref{l2} and \ref{l3}
\begin{eqnarray}\label{calc1}&&C_1 [z^n]\Big[\sqrt{\frac{1+z}{1-z}}\Big]+C_2 +C_3 [z^n]\Big[\frac{\sqrt{1+z}}{(1-z)^{\frac{3}{2}}}\Big]\\
&=&C_1  \frac{2}{\sqrt{2\pi}}\frac{1}{\sqrt{n}}+C_2+C_3\Big(\frac{4}{\sqrt{2\pi}} \sqrt{n}+\frac{1}{\sqrt{2\pi}}\frac{1}{\sqrt{n}}\Big)+O\Big(\frac{1}{n^{\frac{3}{2}}}\Big)\nonumber\\
&=&\frac{2}{\sqrt{2\pi}}\sqrt{n} +\frac{E(Y^2)-1}{2E(Y)}\nonumber\\&+&\frac{1}{\sqrt{2\pi}}\Big[\frac{1}{3}- \frac{E(Y^3)}{3E(Y)}+\frac{(E(Y^2)-1)^2}{2(E(Y))^2}\Big]\frac{1}{\sqrt{n}}+O\Big(\frac{1}{n^{\frac{3}{2}}}\Big).\nonumber\end{eqnarray}

The behavior of the last term in (\ref{calc}) is determined by Lemma \ref{l1}. If ($A_2$) holds, then by Lemma
\ref{not1}(ii) we have $\psi(-1)\neq 0$, so the $K_{-1}(z)$ is not among the summands, hence, by Lemma \ref{l1}(i) we have
\begin{equation}\label{calc3}2E(Y)\sum_{\psi(\alpha)=0}\frac{1}{\psi'(\alpha)}\frac{\alpha}{(\alpha-1)^3}[z^n][K_{\alpha}(z)]=O\Big(\frac{1}{n^{\frac{3}{2}}} \Big).\end{equation}
If ($A_2^c$) holds, then $K_{-1}(z)$ is among the summands, and by Lemma \ref{l1}(ii) it makes a higher order contribution so that
\begin{equation}\label{calc4}2E(Y)\sum_{\psi(\alpha)=0}\frac{1}{\psi'(\alpha)}\frac{\alpha}{(\alpha-1)^3}[z^n][K_{\alpha}(z)]=
-\frac{E(Y)}{4}\frac{(-1)^n}{\psi'(-1)}\frac{2}{\sqrt{2\pi}}\frac{1}{\sqrt{n}}+O\Big(\frac{1}{n^{\frac{3}{2}}} \Big).\end{equation}
Note that $\psi(-1)=0$ implies $h(-1)=-1$, and (from ($A_2^c$))
$$h'(-1)=\sum_{k=0}^\infty kp_k(-1)^k=-\sum_{k\;\;odd} kp_k=-E(Y),$$
so
\begin{equation}\label{pdd}(A_2^c)\;\;\Rightarrow\;\;\psi'(-1)=-\frac{1}{2}\phi'(-1)=-\frac{1}{2}[-2-2h(-1)+2h'(-1)]=E(Y),\end{equation}
hence (\ref{calc4}) gives
\begin{equation}\label{calc5}2E(Y)\sum_{\psi(\alpha)=0}\frac{1}{\psi'(\alpha)}\frac{\alpha}{(\alpha-1)^3}[z^n][K_{\alpha}(z)]=
\frac{(-1)^{n+1}}{4}\frac{2}{\sqrt{2\pi}}\frac{1}{\sqrt{n}}+O\Big(\frac{1}{n^{\frac{3}{2}}} \Big).\end{equation}

From (\ref{calc}),(\ref{calc1}) and (\ref{calc3}), we get the case $j=0$ of Theorem \ref{main}. From (\ref{calc}),(\ref{calc1}) and (\ref{calc5}), we get the case $j=0$ of Theorem \ref{main2}.

We now move on to the case $j\geq 1$. We note that, by Lemma \ref{gexp}, we can write
$$H_j(z)=\rho(z)^j H_0(z)+j\frac{1}{1-z},$$
so that
\begin{equation}\label{ze}E(X_n\;|\;X_0=j)=[z^n]H_j(z)=[z^n][\rho(z)^j H_0(z)]+j.\end{equation}
We shall find it easier to first estimate the coefficients $[z^n][z^j\rho(z)^j H_0(z)]$, and then use the obvious relation
\begin{equation}\label{or}[z^n][\rho(z)^j H_0(z)]=[z^{n+j}][z^j\rho(z)^j H_0(z)].\end{equation}

By the Binomial Theorem,
$$z^j\rho(z)^j=[1-\sqrt{1-z^2}]^j=\sum_{l=0}^j \binom{j}{l}(-1)^l (1-z^2)^{\frac{l}{2}},$$
hence, using (\ref{dec}),
\begin{eqnarray}\label{du}&&[z^n][z^j \rho(z)^j H_0(z)] = C_1 \sum_{l=0}^j \binom{j}{l}(-1)^l [z^n]\Big[\sqrt{\frac{1+z}{1-z}}(1-z^2)^{\frac{l}{2}}\Big]\\
&+&C_2 \sum_{l=0}^j \binom{j}{l}(-1)^l [z^n]\Big[\frac{(1-z^2)^{\frac{l}{2}}}{1-z}\Big]\nonumber\\&+&C_3 \sum_{l=0}^j \binom{j}{l}(-1)^l [z^n]\Big[\frac{\sqrt{1+z}}{(1-z)^{\frac{3}{2}}}(1-z^2)^{\frac{l}{2}}\Big]\nonumber\\
&+&C_4 \sum_{l=0}^j \binom{j}{l}(-1)^l [z^n]\Big[(1-z^2)^{\frac{l}{2}}\Big]\nonumber\\
&+&2E(Y)\sum_{\psi(\alpha)=0}\frac{1}{\psi'(\alpha)}\frac{\alpha}{(\alpha-1)^3}\sum_{l=0}^j \binom{j}{l}(-1)^l [z^n]\Big[K_{\alpha}(z)(1-z^2)^{\frac{l}{2}}\Big].\nonumber
\end{eqnarray}
Examining the first sum in (\ref{du}), we see from Lemma \ref{l2} that the only term in this sum making a contribution up to order $O\Big(\frac{1}{n^{\frac{3}{2}}}\Big)$
is the term corresponding to $l=0$, giving
\begin{eqnarray}\label{es1}&&\sum_{l=0}^j \binom{j}{l}(-1)^l  [z^n]\Big[\sqrt{\frac{1+z}{1-z}}(1-z^2)^{\frac{l}{2}}\Big]= [z^n]\Big[\sqrt{\frac{1+z}{1-z}}\Big]+O\Big(\frac{1}{n^{\frac{3}{2}}}\Big)\nonumber\\
&=&\frac{2}{\sqrt{2\pi}}\frac{1}{\sqrt{n}}+O\Big(\frac{1}{n^{\frac{3}{2}}}\Big).\end{eqnarray}

Similarly, for the second sum in (\ref{du}) we see from Lemma \ref{l5} that the only terms making a contribution up to order $O\Big(\frac{1}{n^{\frac{3}{2}}}\Big)$
are those corresponding to $l=0$ and $l=1$, giving
\begin{eqnarray}\label{es2}&&\sum_{l=0}^j \binom{j}{l}(-1)^l [z^n]\Big[ (1-z^2)^{\frac{l}{2}}\frac{1}{1-z}\Big]\\&=&[z^n]\Big[\frac{1}{1-z}-j\frac{(1-z^2)^{\frac{1}{2}}}{1-z}\Big]+O\Big(\frac{1}{n^{\frac{3}{2}}}\Big)
=1-j \frac{2}{\sqrt{2\pi}}\frac{1}{\sqrt{n}}+O\Big(\frac{1}{n^{\frac{3}{2}}}\Big).\nonumber\end{eqnarray}

For the third sum in (\ref{du}), we see from Lemma \ref{l3} that the terms making a contribution are those corresponding to $l=0,1,2$, giving
\begin{eqnarray}\label{es3}&&\sum_{l=0}^j \binom{j}{l}(-1)^l [z^n]\Big[(1-z^2)^{\frac{l}{2}}\frac{\sqrt{1+z}}{(1-z)^{\frac{3}{2}}}\Big]\\
&=&[z^n]\Big[\frac{\sqrt{1+z}}{(1-z)^{\frac{3}{2}}}\Big]-j[z^n]\Big[\frac{\sqrt{1+z}}{(1-z)^{\frac{3}{2}}}(1-z^2)^{\frac{1}{2}}\Big]\nonumber\\&+&
\frac{1}{2}j(j-1)[z^n]\Big[\frac{\sqrt{1+z}}{(1-z)^{\frac{3}{2}}} (1-z^2)\Big]+O\Big(\frac{1}{n^{\frac{3}{2}}}\Big)\nonumber\\
&=&\frac{2}{\sqrt{2\pi}} \sqrt{n}-j +\Big(\frac{1}{2}+j(j-1)\Big)\frac{1}{\sqrt{2\pi}}\frac{1}{\sqrt{n}} +O\Big(\frac{1}{n^{\frac{3}{2}}}\Big).\nonumber\end{eqnarray}

For the fourth sum in (\ref{du}), Lemma \ref{lll} implies that none of the terms make a contribution up to order $O\Big(\frac{1}{n^{\frac{3}{2}}}\Big)$, that is
\begin{equation}\label{es35}\sum_{l=0}^j \binom{j}{l}(-1)^l [z^n]\Big[(1-z^2)^{\frac{l}{2}}\Big]=O\Big(\frac{1}{n^{\frac{3}{2}}}\Big).\end{equation}

The behavior of the last sum in (\ref{du}) is determined by Lemma \ref{l1}. In case ($A_2$) holds, so that
$\alpha=-1$ is not a zero of $\psi$ (Lemma \ref{not1}(ii)) we have from Lemma \ref{l1}(i) that all terms in this sum are of order $O\Big(\frac{1}{n^{\frac{3}{2}}}\Big)$, so that
\begin{equation}\label{es41}\sum_{\psi(\alpha)=0}\frac{1}{\psi'(\alpha)}\frac{\alpha}{(\alpha-1)^3}\sum_{l=0}^j \binom{j}{l}(-1)^l [z^n]\Big[K_{\alpha}(z)(1-z^2)^{\frac{l}{2}}\Big]=O\Big(\frac{1}{n^{\frac{3}{2}}}\Big).\end{equation}
In the case that ($A_2^c$) holds, we have the term corresponding to $\alpha=-1$ in the sum, and in this case Lemma \ref{l1}(ii) tells us that the term
corresponding to $\alpha=-1$ and $l=0$ gives us a contribution, so that
\begin{eqnarray}\label{es42}&&\sum_{\psi(\alpha)=0}\frac{1}{\psi'(\alpha)}\frac{\alpha}{(\alpha-1)^3}\sum_{l=0}^j \binom{j}{l}(-1)^l [z^n]\Big[K_{\alpha}(z)(1-z^2)^{\frac{l}{2}}\Big]\\
&=&-\frac{1}{8}\frac{1}{\psi'(-1)}[z^n][K_{-1}(z)] +O\Big(\frac{1}{n^{\frac{3}{2}}}\Big)=\frac{1}{4E(Y)}\frac{1}{\sqrt{2\pi}}\frac{(-1)^{n+1}}{\sqrt{n}} +O\Big(\frac{1}{n^{\frac{3}{2}}}\Big),\nonumber\end{eqnarray}
where in the last step we have also used (\ref{pdd}).

Summing all these estimates, we have that, in the case ($A_2$) holds, (\ref{du})-(\ref{es41}) imply
\begin{eqnarray}\label{af1}&&[z^n][z^j \rho(z)^j H_0(z)] =\frac{2}{\sqrt{2\pi}} \sqrt{n}+(C_2-j)\\&+&\Big(C_1-jC_2+\frac{1}{4}+\frac{j(j-1)}{2}\Big)\frac{2}{\sqrt{2\pi}}\frac{1}{\sqrt{n}}
  +O\Big(\frac{1}{n^{\frac{3}{2}}}\Big),\nonumber\end{eqnarray}
while in the case that ($A_2^c$) holds, (\ref{du})-(\ref{es3}) and (\ref{es42}) imply
\begin{eqnarray}\label{af2}&&[z^n][z^j \rho(z)^j H_0(z)] =\frac{2}{\sqrt{2\pi}} \sqrt{n}+(C_2-j)\\&+&\Big(C_1-jC_2+\frac{1}{4}+\frac{j(j-1)}{2}+\frac{1}{8}(-1)^{n+1}\Big)\frac{2}{\sqrt{2\pi}}\frac{1}{\sqrt{n}}
  +O\Big(\frac{1}{n^{\frac{3}{2}}}\Big).\nonumber\end{eqnarray}

Assuming now that ($A_2$) holds, from (\ref{or}) and (\ref{af1}) we have
\begin{eqnarray}\label{ff}&&[z^n][\rho(z)^j H_0(z)]=\frac{2}{\sqrt{2\pi}} \sqrt{n+j}+C_2-j\\&+&\Big(C_1-jC_2+\frac{1}{4}+\frac{j(j-1)}{2}\Big)\frac{2}{\sqrt{2\pi}}\frac{1}{\sqrt{n+j}}
  +O\Big(\frac{1}{(n+j)^3} \Big),\nonumber
\end{eqnarray}
and in view of
\begin{eqnarray}\label{n1}&&\sqrt{n+j}=\sqrt{n}\sqrt{1+\frac{j}{n}}
=\sqrt{n}+\frac{j}{2}\frac{1}{\sqrt{n}}+O\Big(\frac{1}{n^{\frac{3}{2}}}\Big),\nonumber\\
&&\frac{1}{\sqrt{n+j}}=\frac{1}{\sqrt{n}}\frac{1}{\sqrt{1+\frac{j}{n}}}=
\frac{1}{\sqrt{n}}+O\Big(\frac{1}{n^{\frac{3}{2}}}\Big),\end{eqnarray}
(\ref{ff}) implies
\begin{eqnarray}\label{ff1}&&[z^n][\rho(z)^j H_0(z)]=\frac{2}{\sqrt{2\pi}}\sqrt{n} +(C_2-j)\\&+&\Big(C_1-jC_2+\frac{1}{4}+\frac{j^2}{2}\Big)\frac{2}{\sqrt{2\pi}}\frac{1}{\sqrt{n}}
  +O\Big(\frac{1}{n^{\frac{3}{2}}} \Big),\nonumber
\end{eqnarray}
So that together with (\ref{ze}) we have
\begin{eqnarray}\label{ff2}&&E(X_n\;|\;X_0=j)=\frac{2}{\sqrt{2\pi}}\sqrt{n} +C_2\\&+&\Big(C_1-jC_2+\frac{1}{4}+\frac{j^2}{2}\Big)\frac{2}{\sqrt{2\pi}}\frac{1}{\sqrt{n}}
  +O\Big(\frac{1}{n^{\frac{3}{2}}} \Big),\nonumber
\end{eqnarray}
and substituting the values of $C_1,C_2,C_3$ as calculated in Lemma \ref{l4}, we obtain the result of Theorem \ref{main}.

In the case that ($A_2^c$) holds, we carry out the same procedure, starting with (\ref{af2}), to obtain the result of Theorem \ref{main2}.

\section{The case $p_0=p_1=\frac{1}{2}$}

\label{special}

It remains to treat the special case given by (\ref{spc}). Since in this case $\phi(x)=1-x$, Lemma \ref{gexp} gives
$$z^{j+1}H_j(z)=\frac{z^{j+1}}{1-z}\frac{\rho(z)^{j+1}}{1-\rho(z)}+ j\frac{z^{j+1}}{1-z}$$
$$=\frac{1}{2}\Big[ \frac{\sqrt{1+z}}{(1-z)^{\frac{3}{2}}}+\frac{1}{1-z}\Big]\Big[1-\sqrt{1-z^2}\Big]^{j+1}+ j\frac{z^{j+1}}{1-z}$$
$$=\frac{1}{2}\Big[ \frac{\sqrt{1+z}}{(1-z)^{\frac{3}{2}}}+\frac{1}{1-z}\Big]\sum_{l=0}^{j+1} \binom{j+1}{l}(-1)^l(1-z^2)^{\frac{l}{2}}+ j\frac{z^{j+1}}{1-z}$$

So that, calculating as in section \ref{mainproof}, we have that for $n\geq j+1$:
$$[z^n][z^{j+1}H_j(z)]=\frac{1}{2} \sum_{l=0}^{j+1} \binom{j+1}{l}(-1)^l[z^n]\Big[\frac{\sqrt{1+z}}{(1-z)^{\frac{3}{2}}}(1-z^2)^{\frac{l}{2}}\Big]$$
$$+\frac{1}{2}\sum_{l=0}^{j+1} \binom{j+1}{l}(-1)^l[z^n]\Big[\frac{(1-z^2)^{\frac{l}{2}}}{1-z}\Big]+ j,$$
$$=\frac{1}{2}\Big[\frac{4}{\sqrt{2\pi}} \sqrt{n}+\frac{1}{\sqrt{2\pi}}\frac{1}{\sqrt{n}}-2(j+1)+j(j+1)\frac{2}{\sqrt{2\pi}}\frac{1}{\sqrt{n}}\Big] $$
$$+\frac{1}{2}\Big[ 1-(j+1) \frac{2}{\sqrt{2\pi}}\frac{1}{\sqrt{n}}\Big] +j +O\Big(\frac{1}{n^{\frac{3}{2}}} \Big) $$
$$=\frac{2}{\sqrt{2\pi}} \sqrt{n}-\frac{1}{2}+\Big(j^2-j-\frac{1}{2}\Big)\frac{1}{\sqrt{2\pi}}\frac{1}{\sqrt{n}} +O\Big(\frac{1}{n^{\frac{3}{2}}} \Big).$$

Therefore, using (\ref{n1}),
\begin{eqnarray}\label{resu}&&E(X_n\;|\; X_0=j)=[z^n][H_j(z)]=[z^{n+j+1}][z^{j+1}H_j(z)]\\
&=&\frac{2}{\sqrt{2\pi}} \sqrt{n+j+1}-\frac{1}{2}+\Big(2j^2-\frac{1}{2}\Big)\frac{1}{\sqrt{2\pi}}\frac{1}{\sqrt{n+j+1}} +O\Big(\frac{1}{n^{\frac{3}{2}}} \Big)\nonumber\\
&=&\frac{2}{\sqrt{2\pi}} \Big( \sqrt{n}+\frac{1}{2}(j+1)\frac{1}{\sqrt{n}}\Big)-\frac{1}{2}+\Big(j^2-\frac{1}{2}\Big)\frac{1}{\sqrt{2\pi}}\frac{1}{\sqrt{n}} +O\Big(\frac{1}{n^{\frac{3}{2}}} \Big)\nonumber\\
&=&\frac{2}{\sqrt{2\pi}}\sqrt{n}-\frac{1}{2}+\Big(j^2+j+\frac{1}{2}\Big)\frac{1}{\sqrt{2\pi}}\frac{1}{\sqrt{n}} +O\Big(\frac{1}{n^{\frac{3}{2}}} \Big)\nonumber.\end{eqnarray}
Since in the case under consideration we have $E(Y)=E(Y^2)=E(Y^3)=\frac{1}{2}$, it can be checked that the result (\ref{resu}) is exactly that
given by Theorem \ref{main}.


\begin{thebibliography}{99}

\bibitem{bender} E.A. Bender, Asymptotic methods in enumeration, SIAM Review {\bf{16}} (1974), 485-515.

\bibitem{chen} M.F. Chen, `Eigenvalues, Inequalities and Ergodic Theory', Springer-Verlag (London), 2005.

\bibitem{drmota} M. Drmota, Discrete random walk on one-sided ``periodic'' graphs, Discrete Math. \& Theor. Computer Science AC (2003), 83-94.

\bibitem{flajolet} P. Flajolet \& R. Sedgewick, `Analytic Combinatorics', Cambridge University Press (Cambridge), 2009.

\bibitem{lalley} S.P. Lalley, Return probabilities for random walk on a half-line, J. Theor. Probability {\bf{8}} (1995), 571-599.

\bibitem{lawler} G.F. Lawler \& V. Limic, `Random Walk: A Modern Introduction', Cambridge University Press (Cambridge), 2010.

\bibitem{woess} W. Woess, `Random walks on infinite graphs and groups', Cambridge University Press (Cambridge), 2000.

\bibitem{zworski} M. Zworski, Resonances in Physics and Geometry, Notices AMS {\bf{46}} (1999), 319-328.

\end{thebibliography}
\end{document}